\newcommand{\eps}{\varepsilon}
\newcommand{\R}{\mathbb R}
\newcommand{\N}{\mathbb N}
\newcommand{\then}{\Longrightarrow}
\newcommand{\J}{{\cal J}}
\newcommand{\A}{{\cal A}}
\newcommand{\M}{{\cal M}}
\newcommand{\enne}{{\cal N}}
\DeclareMathOperator{\codim}{codim}
\DeclareMathOperator*{\esssup}{ess\; sup}
\newtheorem{corollary}{Corollary}[section]
\newtheorem{theorem}[corollary]{Theorem}
\newtheorem{lemma}[corollary]{Lemma}
\newtheorem{proposition}[corollary]{Proposition}
\theoremstyle{definition}
\newtheorem{definition}[corollary]{Definition}
\newtheorem{remark}[corollary]{Remark}
\numberwithin{equation}{section}
\begin{document}

\title{{\bf Multiple solutions for some\\
 symmetric supercritical problems}\thanks{The 
research that led to the present paper was partially supported by  
Fondi di Ricerca di Ateneo {\sl ``Metodi variazionali e topologici nello studio di fenomeni non
lineari''} and Research Funds INdAM -- GNAMPA Project 2017 
``Metodi variazionali per fenomeni non--locali''}}

\author{A.M. Candela, G. Palmieri, A. Salvatore\\
{\small Dipartimento di Matematica}\\
{\small Universit\`a degli Studi di Bari Aldo Moro} \\
{\small Via E. Orabona 4, 70125 Bari, Italy}\\
{\small \it annamaria.candela@uniba.it}\\
{\small \it giuliana.palmieri.uniba@gmail.com}\\ 
{\small \it addolorata.salvatore@uniba.it}}
\date{}

\maketitle

\begin{abstract}
The aim of this paper is investigating the existence of one or more 
critical points of a family of functionals which generalizes the model problem 
\[
\bar J(u)\ =\ \frac1p\ \int_\Omega \bar A(x,u)|\nabla u|^p dx - \int_\Omega G(x,u) dx
\]
in the Banach space $X = W^{1,p}_0(\Omega)\cap L^\infty(\Omega)$, 
where $\Omega \subset \R^N$ is an open bounded domain, $1 < p < N$ and the real terms
$\bar A(x,t)$ and $G(x,t)$ are $C^1$ Carath\'eo\-do\-ry functions on $\Omega \times \R$.

We prove that, even if the coefficient $\bar A(x,t)$ makes
the variational approach more difficult, if it satisfies ``good'' growth assumptions then at least one
critical point exists also when the nonlinear term $G(x,t)$ has 
a suitable supercritical growth. Moreover, if the functional is even,
it has infinitely many critical levels.    

The proof, which exploits the interaction between two different norms on $X$, 
is based on a weak version of the Cerami--Palais--Smale condition and a suitable intersection lemma
which allow us to use a Mountain Pass Theorem.
\end{abstract}

\noindent
{\it \footnotesize 2000 Mathematics Subject Classification}. {\scriptsize 35J92, 35J20, 35J60, 58E05}.\\
{\it \footnotesize Key words}. {\scriptsize Quasilinear elliptic equation, 
weak Cerami--Palais--Smale condition, Ambrosetti--Rabinowitz condition, supercritical growth}.


\section{Introduction} \label{secintroduction}

Here, we look for critical points of the nonlinear functional
\[
\J(u)\ =\ \int_\Omega A(x,u,\nabla u) dx - \int_\Omega G(x,u) dx,\qquad
u \in {\cal D}\subset W^{1,p}_0(\Omega),
\]
which generalizes the model problem 
\begin{equation}\label{modello}
\bar J(u) = \frac1p \int_\Omega \bar A(x,u) |\nabla u|^p dx - \int_\Omega G(x,u) dx,\qquad
u \in {\cal D}\subset W^{1,p}_0(\Omega),
\end{equation}
where $\Omega$ is an open bounded domain in $\R^N$, $1 < p < N$, 
$A :\Omega \times \R \times \R^N \to \R$, respectively $\bar A :\Omega \times \R \to \R$,
and $G : \Omega \times \R \to \R$ are given functions.

We note that, even in the simplest case $A(x,u,\nabla u)= \frac1p \bar A(x,t)|\nabla u|^p$
and $G(x,t)\equiv 0$, with $\bar A(x,t)$ smooth, bounded 
away from zero but $\frac{\partial \bar A}{\partial t}(x,t) \not\equiv 0$, 
the functional $\bar J$ is defined in $W^{1,p}_0(\Omega)$ but  
is G\^ateaux differentiable only along directions of 
$W^{1,p}_0(\Omega) \cap L^\infty(\Omega)$. 

In the past, such a problem has been overcome by introducing suitable definitions
of critical point for $\J$ and related existence results have been stated (see,
e.g., \cite{AB1,AB2,Ca,PeSq}). Here, as in \cite{CP2}, 
suitable assumptions assure that the functional
$\J$ is $C^1$ in $X=W^{1,p}_0(\Omega) \cap L^\infty(\Omega)$ (see Proposition \ref{smooth1})
and its Euler--Lagrange equation is
\begin{equation}\label{euler}
\left\{
\begin{array}{ll}
- {\rm div} (a(x,u,\nabla u)) + A_t(x,u,\nabla u) = g(x,u)
  &\hbox{in $\Omega$,}\\
u\ = \ 0 & \hbox{on $\partial\Omega$,}
\end{array}
\right.
\end{equation}
where 
\begin{equation}\label{grad}
\hbox{$A_t(x,t,\xi) = \frac{\partial A}{\partial t}(x,t,\xi),\;
a(x,t,\xi) = (\frac{\partial A}{\partial \xi_1}(x,t,\xi),\dots,\frac{\partial A}{\partial \xi_N}(x,t,\xi))$,}\;
G(x,t) = \int_0^t g(x,s) ds.
\end{equation}

We note that, from a physical point of view, problem \eqref{euler} is interesting for its applications. 
For example, if $\Omega = \R^N$ and $A(x,u,\nabla u) = (1 + |u|^2) |\nabla u|^2$,   
model equations of \eqref{euler} appear in Mathematical Physics 
and describe several physical phenomena in the theory of superfluid film and in dissipative
quantum mechanics (for more details, see \cite{LWW} and references therein).

In order to find solutions of \eqref{euler}, i.e. critical points of $\J$ in $X$,
we cannot apply directly existence and 
multiplicity results similar to the classical Ambrosetti--Rabinowitz theorems (see \cite{AR,BBF}).
Indeed, our functional $\J$ does not satisfy the Palais--Smale condition in $X$ as it has Palais--Smale sequences 
which converge in $W^{1,p}_0(\Omega)$ but are unbounded in $L^\infty(\Omega)$ 
(see, e.g., \cite[Example 4.3]{CP2017}). Hence, we have to weaken the definition of Palais--Smale condition 
(see Definition \ref{wCPS}) and use it for stating a generalized version of the Mountain Pass Theorems (see Theorems
\ref{mountainpass} and \ref{abstract}).

In \cite{CP2} the existence of critical points of the functional $\J$, i.e. solutions of \eqref{euler}, has been already 
proved if $p>1$, $A(x,t,\xi)$ satisfies suitable assumptions and $G(x,t)$ has a 
$p$--superlinear growth which has to be subcritical if $p<N$. 
Anyway, even if the dependence from $t$ of the 
principal part $A(x,t,\xi)$ makes
the variational approach more difficult, 
it can allow the nonlinear term $G(x,t)$ to be supercritical.
In fact, the aim of this paper is to extend 
the main statements in \cite{CP2} to a function $G(x,t)$ with critical or supercritical growth
if  $1<p<N$: roughly speaking, we prove that the more $A(x,t,\xi)$ is unbounded and 
grows with respect to $t$, the more $G(x,t)$ can have a supercritical growth.

Since our main theorems need a list of hypotheses, we give 
their complete statements in Section \ref{sec5} (see Theorems \ref{mainesisto} and \ref{mainmolti}),
anyway, here, in order to highlight how our approach improves 
previous results, we consider the particular setting
\[
A(x,t,\xi) \ =\ \frac1p (A_1(x) + A_2(x) |t|^{ps}) |\xi|^p \quad \hbox{and} \quad
g(x,t)\ =\ |t|^{\mu-2}t
\]
with $1 < p < N$, $s \ge 0$, $\mu \ge 1$, so that problem \eqref{euler} reduces to
\begin{equation}\label{euler0}
\left\{
\begin{array}{ll}
- {\rm div} ((A_1(x) + A_2(x) |u|^{ps}) |\nabla u|^{p-2}\nabla u) + s A_2(x)|u|^{ps-2} u |\nabla u|^p\
 =\ |u|^{\mu-2}u
  &\hbox{in $\Omega$,}\\
u\ = \ 0 & \hbox{on $\partial\Omega$.}
\end{array}
\right.
\end{equation}

If $s=0$, then problem \eqref{euler0} has been widely studied
(see, e.g., \cite{PAO} and references therein). On the contrary,
if $s > 0$ we obtain the following result.

\begin{theorem}\label{mod0}
Let $A_1$, $A_2 \in L^{\infty}(\Omega)$ be two given functions such that 
\begin{equation}\label{su0}
A_1(x) \ge \alpha_0,\qquad A_2(x) \ge \alpha_0\qquad \hbox{for a.e. $x \in \Omega$,}
\end{equation}
for a constant $\alpha_0 > 0$. Assume that
\begin{equation}\label{caso0}
2 < 1 + p < p(s+1) < \mu < p^*(s+1),
\end{equation}
where $p^*$ is the critical exponent.
Then, problem \eqref{euler0} has infinitely many weak bounded solutions.
\end{theorem}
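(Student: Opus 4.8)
The plan is to deduce Theorem~\ref{mod0} from the general multiplicity statement for even functionals, Theorem~\ref{mainmolti}, by checking that the data
\[
A(x,t,\xi)=\tfrac1p\bigl(A_1(x)+A_2(x)|t|^{ps}\bigr)|\xi|^p,\qquad g(x,t)=|t|^{\mu-2}t,\qquad G(x,t)=\tfrac1\mu|t|^\mu,
\]
fit its list of hypotheses. By \eqref{grad} one has $a(x,t,\xi)=(A_1(x)+A_2(x)|t|^{ps})|\xi|^{p-2}\xi$ and $A_t(x,t,\xi)=sA_2(x)|t|^{ps-2}t\,|\xi|^p$, which are Carath\'eodory functions with polynomial growth in $t$ of exponent $ps$ and $p$--homogeneous growth in $\xi$; note that $ps>1$ by \eqref{caso0}, so no regularity is lost at $t=0$ and $A$ is of class $C^1$. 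Hence Proposition~\ref{smooth1} applies, $\J$ is $C^1$ on $X=W^{1,p}_0(\Omega)\cap L^\infty(\Omega)$ and its critical points solve \eqref{euler0}. Moreover $A(x,-t,-\xi)=A(x,t,\xi)$ and $g(x,-t)=-g(x,t)$, so $\J$ is even, as needed to obtain infinitely many critical levels.

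The routine structural checks go as follows. By \eqref{su0}, $a(x,t,\xi)\cdot\xi=(A_1(x)+A_2(x)|t|^{ps})|\xi|^p\ge\alpha_0(1+|t|^{ps})|\xi|^p$, the strict $p$--coercivity with weight $1+|t|^{ps}$; one also has the upper bound $A(x,t,\xi)\le\tfrac1p(\|A_1\|_\infty+\|A_2\|_\infty|t|^{ps})|\xi|^p$, the identity $a(x,t,\xi)\cdot\xi=p\,A(x,t,\xi)$, the ``good sign'' in the strong form $A_t(x,t,\xi)\,t=sA_2(x)|t|^{ps}|\xi|^p\ge0$, and the strict monotonicity of $\xi\mapsto|\xi|^{p-2}\xi$ which enters the weak Cerami--Palais--Smale condition of Definition~\ref{wCPS}.

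The core point is the Ambrosetti--Rabinowitz--type condition linking $A$ and $G$. By \eqref{caso0} one can fix $\vartheta$ with $p(s+1)<\vartheta<\mu$, and a direct computation gives, for every $u\in X$,
\[
\J(u)-\tfrac1\vartheta\,\J'(u)[u]
=\Bigl(\tfrac1p-\tfrac1\vartheta\Bigr)\!\int_\Omega A_1|\nabla u|^p\,dx
+\Bigl(\tfrac1p-\tfrac{s+1}{\vartheta}\Bigr)\!\int_\Omega A_2|u|^{ps}|\nabla u|^p\,dx
+\Bigl(\tfrac1\vartheta-\tfrac1\mu\Bigr)\!\int_\Omega|u|^\mu\,dx,
\]
where all three coefficients are positive precisely because $p<p(s+1)<\vartheta<\mu$; together with $0<\vartheta\,G(x,t)=\tfrac\vartheta\mu|t|^\mu\le|t|^\mu=g(x,t)\,t$, this is the abstract $p$--superlinearity hypothesis. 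The same identity, evaluated along a Cerami sequence $(u_n)$ (so that $(1+\|u_n\|)\|\J'(u_n)\|\to0$), shows that $(u_n)$ is bounded in $W^{1,p}_0(\Omega)$ — exactly the situation addressed by the weak Cerami--Palais--Smale condition, rather than by the ordinary one, since boundedness in $L^\infty(\Omega)$ is not granted. Finally, since $\mu>p(s+1)>p$, on every finite--dimensional subspace of $X$ one has $\J(t\varphi)\to-\infty$ as $t\to+\infty$, which yields the ``unbounded below'' part of the mountain--pass geometry.

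The step I expect to be the real obstacle is reconciling the \emph{supercritical} range $\mu<p^*(s+1)$ with the subcriticality requirement on $G$ in Theorem~\ref{mainmolti}. The coercive part of $\J$ only controls $\|u\|^p_{W^{1,p}_0}$ and $\int_\Omega|u|^{ps}|\nabla u|^p\,dx=(s+1)^{-p}\||u|^{s+1}\|^p_{W^{1,p}_0}$, and the embedding at one's disposal is $|u|^{s+1}\in W^{1,p}_0(\Omega)\hookrightarrow L^{p^*}(\Omega)$, i.e. $u\in L^{p^*(s+1)}(\Omega)$. Since $p(s+1)<\mu<p^*(s+1)$, H\"older's inequality together with this embedding gives $\int_\Omega|u|^\mu\,dx\le C\,\||u|^{s+1}\|^{\mu/(s+1)}_{W^{1,p}_0}$ with exponent $\mu/(s+1)>p$, so $\int_\Omega|u|^\mu\,dx$ is of higher order with respect to the coercive part near the origin; this is the precise sense in which $G$ is ``subcritical relative to $A$'', and it also feeds the intersection lemma providing $\J\ge\alpha>0$ on the relevant sets. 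Once all these verifications are carried out, Theorem~\ref{mainmolti} applies and produces a sequence of critical points of $\J$ in $X$, that is, infinitely many weak bounded solutions of \eqref{euler0}.
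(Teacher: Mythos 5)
Your proposal is correct and follows essentially the same route as the paper, which proves Theorem \ref{mod0} by verifying the hypotheses of Theorem \ref{mainmolti} (via Remark \ref{come0}) for $\bar A(x,t)=A_1(x)+A_2(x)|t|^{ps}$ and $g(x,t)=|t|^{\mu-2}t$, so that $q=\mu$ and \eqref{cps0} is exactly $\mu<p^*(s+1)$. Your more detailed checks (the $C^1$ regularity requiring $ps>1$, the identity $a\cdot\xi=pA$, the signs of the three coefficients in $\J(u)-\tfrac1\vartheta\J'(u)[u]$, and the embedding $|u|^{s+1}\in W^{1,p}_0(\Omega)\hookrightarrow L^{p^*}(\Omega)$ behind the relaxed growth condition) are all consistent with the machinery the paper has already built into Theorem \ref{mainmolti}.
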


To our knowledge, there are very few results dealing with quasilinear supercritical problems.
Usually, they make use of a suitable change of variables which reduces the supercritical
problem to a subcritical one (see, e.g., \cite{LWW}).
Unluckily, such an approach works only if $A(x,t,\xi)$ has a very particular form,
and so, for example, it is not allowed also in the simplest case $A_2(x)=1$ but $A_1(x)$ not constant. 
Different arguments can be found in \cite{ABO} where, by using a sequence of
truncated functionals, the authors 
prove that problem \eqref{euler0} with, e.g., $p=2$, has at least
one positive solution if \eqref{caso0} and the further condition
$2 (s+1) < 2^*$ hold, which imply $N < 6$ (see \cite[Theorem 2.1]{ABO}). 
Differently from \cite{ABO}, here we use variational methods which 
exploit the interaction between two different norms 
and we do not require this additional restriction
(see also \cite{CS2018} where, in the same setting of Theorem \ref{mod0},
the existence of at least one positive solution of problem \eqref{euler0} is 
proved). 
\smallskip

This paper is organized as follows.
In Section \ref{abstractsection} we introduce the weak Cerami--Palais--Smale condition
and prove some related abstract existence and multiplicity results which generalize 
the Mountain Pass Theorem (see \cite[Theorem 2.2]{Ra1}) and its symmetric version (see \cite[Theorem 9.12]{Ra1}).
In Section \ref{variational}, after introducing the hypotheses for $A(x,t,\xi)$ and $G(x,t)$, 
we give the variational formulation of our problem and prove that $\J$ satisfies the weak Cerami--Palais--Smale condition. 
Finally, in Section \ref{sec5} the main results are stated and proved.  


\section{Abstract setting}
\label{abstractsection}

We denote $\N = \{1, 2, \dots\}$ and, throughout this section, we assume that:
\begin{itemize}
\item $(X, \|\cdot\|_X)$ is a Banach space with dual 
$(X',\|\cdot\|_{X'})$,
\item $(W,\|\cdot\|_W)$ is a Banach space such that
$X \hookrightarrow W$ continuously, i.e. $X \subset W$ and a constant $\sigma_0 > 0$ exists
such that
\begin{equation}
\label{continuity}
\|u\|_W \ \le \ \sigma_0\ \|u\|_X\qquad \hbox{for all $u \in X$,}
\end{equation}
\item $J : {\cal D} \subset W \to \R$ and $J \in C^1(X,\R)$ with $X \subset {\cal D}$,
\item $K_J = \{u \in X:\ dJ(u) = 0\}$ is
the set of the critical points of $J$ in $X$.
\end{itemize}
Furthermore, fixing $\beta \in \R$, we define
\begin{itemize}
\item $K_J^\beta = \{u \in X:\ J(u) = \beta,\ dJ(u) = 0\}$
the set of the critical points of $J$ in $X$ at level $\beta$,
\item $J^\beta = \{u\in X:\ J(u) \le \beta\}$
the sublevel of $J$ with respect to $\beta$,
\end{itemize}
and, taking $r > 0$, by pointing out 
the two different norms $\|\cdot\|_W$ and $\|\cdot\|_X$, we set
\begin{itemize}
\item $B^X_r\ =\ \{u \in X:\ \|u\|_X\ <\ r\},\qquad B^W_r\ =\ \{u \in X:\ \|u\|_W\ <\ r\}$,
\item $\bar{B}^X_r\ =\ \{u \in X:\ \|u\|_X\ \le\ r\}, \qquad \bar{B}^W_r\ =\ \{u \in X:\ \|u\|_W\ \le\ r\}$,
\item $S^X_r\ =\ \{u \in X:\ \|u\|_X\ =\ r\}, \qquad S^W_r\ =\ \{u \in X:\ \|u\|_W\ =\ r\}$.
\end{itemize}

Anyway, in order to avoid any
ambiguity and simplify, when possible, the notation, 
from now on by $X$ we denote the space equipped with
its given norm $\|\cdot\|_X$ while, if a different norm is involved,
we write it explicitly.

For simplicity, taking $\beta \in \R$, we say that a sequence
$(u_n)_n\subset X$ is a {\sl Cerami--Palais--Smale sequence at level $\beta$},
briefly {\sl $(CPS)_\beta$--sequence}, if
\[
\lim_{n \to +\infty}J(u_n) = \beta\quad\mbox{and}\quad 
\lim_{n \to +\infty}\|dJ(u_n)\|_{X'} (1 + \|u_n\|_X) = 0.
\]
Moreover, $\beta$ is a {\sl Cerami--Palais--Smale level}, briefly {\sl $(CPS)$--level}, 
if there exists a $(CPS)_\beta$--sequence.

As $(CPS)_\beta$--sequences may exist which are unbounded in $\|\cdot\|_X$
but converge with respect to $\|\cdot\|_W$,
we have to weaken the classical Cerami--Palais--Smale 
condition in a suitable way according to the ideas already developed in 
previous papers (see, e.g., \cite{CP1,CP2,CP3}).  

\begin{definition} \label{wCPS}
The functional $J$ satisfies the
{\slshape weak Cerami--Palais--Smale 
condition at level $\beta$} ($\beta \in \R$), 
briefly {\sl $(wCPS)_\beta$ condition}, if for every $(CPS)_\beta$--sequence $(u_n)_n$,
a point $u \in X$ exists, such that 
\begin{description}{}{}
\item[{\sl (i)}] $\displaystyle 
\lim_{n \to+\infty} \|u_n - u\|_W = 0\quad$ (up to subsequences),
\item[{\sl (ii)}] $J(u) = \beta$, $\; dJ(u) = 0$.
\end{description}
If $J$ satisfies the $(wCPS)_\beta$ condition at each level $\beta \in I$, $I$ real interval, 
we say that $J$ satisfies the $(wCPS)$ condition in $I$.
\end{definition}

Since in \cite{CP3} a Deformation Lemma has been proved 
if the functional $J$ satisfies a weaker version of the $(wCPS)_\beta$ condition,
namely any $(CPS)$--level is also a critical level, in particular we can state the following result. 

\begin{lemma}[Deformation Lemma] \label{def}
Let $J\in C^1(X,\R)$ and consider $\beta \in \R$ such that
\begin{itemize}
\item $J$ satisfies the $(wCPS)_\beta$ condition, 
\item $\ K_J^\beta = \emptyset$.
\end{itemize}
Then, fixing any $\bar \eps > 0$, there exist
a constant $\eps > 0$ and a homeomorphism $\psi : X \to X$
such that $2\eps < {\bar \eps}$ and
\begin{itemize}
\item[$(i)$] $\psi(J^{\beta+\eps}) \subset J^{\beta-\eps}$,
\item[$(ii)$] $\psi(u) = u$ for all $u \in X$ such that either
$J(u) \le \beta-\bar\eps$ or $J(u) \ge \beta+\bar\eps$.
\end{itemize}
Moreover, if $J$ is even on $X$, then $\psi$ can be chosen odd.
\end{lemma}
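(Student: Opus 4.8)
The plan is to reduce the statement to the abstract Deformation Lemma proved in \cite{CP3}, whose only requirement on $J\in C^1(X,\R)$, besides $K_J^\beta=\emptyset$, is the weaker form of the $(wCPS)_\beta$ condition recalled just above, i.e. that every $(CPS)$--level be a critical level. So the first — and essentially the only conceptual — step is to observe that, under our hypotheses, \emph{no} $(CPS)_\beta$--sequence can exist at all: if $(u_n)_n$ were one, then the $(wCPS)_\beta$ condition would provide $u\in X$ with $J(u)=\beta$ and $dJ(u)=0$, i.e. $u\in K_J^\beta$, contradicting $K_J^\beta=\emptyset$. Hence $\beta$ is not a $(CPS)$--level, the deformation lemma of \cite{CP3} applies at this very $\beta$, and conclusions $(i)$, $(ii)$ — together with the oddness of $\psi$ when $J$ is even — follow at once.

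For completeness I would also recall the mechanism behind that result, which is the standard quantitative/pseudo--gradient deformation argument adapted to the Cerami weight $1+\|\cdot\|_X$. From the non--existence of $(CPS)_\beta$--sequences one obtains, arguing by contradiction, constants $\sigma>0$ and $\eps_1\in(0,\bar\eps/2)$ such that
\[
\|dJ(u)\|_{X'}\,(1+\|u\|_X)\ \ge\ \sigma \qquad \text{for every } u\in X \text{ with } |J(u)-\beta|\le 2\eps_1 .
\]
On the open set $\{u\in X:\ |J(u)-\beta|<2\eps_1\}\subset X\setminus K_J$ one constructs a locally Lipschitz pseudo--gradient vector field $V$ with $\|V(u)\|_X\le 1+\|u\|_X$ and $\langle dJ(u),V(u)\rangle\ge\tfrac12\|dJ(u)\|_{X'}(1+\|u\|_X)$, multiplies it by a locally Lipschitz cut--off $\chi:X\to[0,1]$ equal to $1$ on $\{|J-\beta|\le\eps_1\}$ and vanishing outside $\{|J-\beta|<2\eps_1\}$, and considers the Cauchy problem $\dot\eta=-\chi(\eta)\,V(\eta)$, $\eta(0,\cdot)=\mathrm{id}$. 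Since $\chi V$ is globally defined (extended by $0$), locally Lipschitz and of at most linear growth in $\|\cdot\|_X$, the flow $\eta(t,\cdot)$ is defined on all of $X$ for every $t\in\R$ and is a homeomorphism of $X$, with inverse $\eta(-t,\cdot)$.

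I would then take $\eps:=\eps_1/2$ — so that $2\eps=\eps_1<\bar\eps$ and also $2\eps_1<\bar\eps$ — and $T:=4\eps/\sigma$, and set $\psi:=\eta(T,\cdot)$. Along the flow $J(\eta(t,u))$ is non--increasing and, on the region $\{|J-\beta|\le\eps_1\}$ where $\chi=1$, it decreases with speed at least $\sigma/2$; this is exactly enough to drive any trajectory issued from $J^{\beta+\eps}$ down into $J^{\beta-\eps}$ within time $T$ (either it reaches the level $\beta-\eps$ first, and then monotonicity keeps it below, or it stays in the strip and the linear rate forces $J(\psi(u))\le\beta+\eps-\tfrac{\sigma}{2}T=\beta-\eps$), which is $(i)$. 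Conclusion $(ii)$ is immediate: if $J(u)\le\beta-\bar\eps$ or $J(u)\ge\beta+\bar\eps$, then $|J(u)-\beta|>2\eps_1$, hence $\chi(u)=0$ and $u$ is an equilibrium of the flow, so $\psi(u)=u$. Finally, when $J$ is even one has $dJ(-u)=-dJ(u)$ and $\chi(-u)=\chi(u)$, so $V$ may be taken odd; then $\chi V$ is odd, $\eta(t,-u)=-\eta(t,u)$, and $\psi$ is odd.

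The only genuinely technical point is the construction of the Cerami--adapted pseudo--gradient field $V$ and the verification that its truncated flow is complete; both are routine and, in any case, are already carried out in \cite{CP3}, so in the present paper this lemma is really only a restatement of that result once the absence of $(CPS)_\beta$--sequences has been noted.
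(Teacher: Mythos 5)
Your proposal is correct and follows essentially the same route as the paper, which likewise disposes of the lemma by invoking the Deformation Lemma of \cite{CP3} (there, Lemma 2.3 with $\beta_1=\beta_2=\beta$) after observing that the $(wCPS)_\beta$ condition together with $K_J^\beta=\emptyset$ rules out $(CPS)_\beta$--sequences. Your additional sketch of the Cerami--weighted pseudo--gradient flow is sound but not needed beyond the citation.
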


\begin{proof}
It is enough to reason as in \cite[Lemma 2.3]{CP3} with $\beta_1 = \beta_2 = \beta$
and to note that the deformation $\psi: X \to X$ is a homeomorphism.    
\end{proof}

From Lemma \ref{def} we obtain the following generalization of the Mountain Pass Theorem
(compare it with \cite[Theorem 1.7]{CP3} and the classical statement in \cite[Theorem 2.2]{Ra1}).

\begin{theorem}
\label{mountainpass}
Let $J\in C^1(X,\R)$ be such that $J(0) = 0$
and the $(wCPS)$ condition holds in $\R_+$.\\
Moreover, assume that there exist a continuous map
$\ell : X \to \R$, some constants $r_0$, $\varrho_0 > 0$, and  
 $e \in X$ such that 
\begin{itemize}
\item[$(i)$] $\; \ell(0) =0 \qquad\hbox{and}\qquad \ell(u) \ge \|u\|_W \quad \hbox{for all $u \in X$}$;
\item[$(ii)$] $\; u \in X, \quad \ell(u) = r_0\qquad \then\qquad J(u) \ge \varrho_0$;
\item[$(iii)$] $\; \|e\|_W > r_0\qquad\hbox{and}\qquad J(e) < \varrho_0$.
\end{itemize}
Then, $J$ has a Mountain Pass critical point $u_0 \in X$ such that $J(u_0) \ge \varrho_0$.
\end{theorem}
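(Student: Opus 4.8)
The plan is to derive Theorem \ref{mountainpass} from the Deformation Lemma (Lemma \ref{def}) by the usual minimax argument, being careful that all the topology in the minimax class is taken with respect to the strong norm $\|\cdot\|_X$ (so that the homeomorphism $\psi$ of Lemma \ref{def} acts continuously), while the geometric separation of the "ring" from $0$ and from $e$ is measured with the weaker quantity $\ell(\cdot)$, which dominates $\|\cdot\|_W$. First I would introduce the family of paths
\[
\Gamma \ =\ \{\gamma \in C([0,1],X):\ \gamma(0) = 0,\ \gamma(1) = e\}
\]
and define the candidate critical value
\[
c \ =\ \inf_{\gamma \in \Gamma}\ \max_{t \in [0,1]} J(\gamma(t)).
\]
This is well posed since $\Gamma \neq \emptyset$ (e.g. $t \mapsto te$, noting $e \in X$ and $J \in C^1(X,\R)$ ensures continuity), and each $J \circ \gamma$ is continuous on the compact $[0,1]$.

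The second step is the separation estimate $c \ge \varrho_0$. Fix any $\gamma \in \Gamma$. The map $t \mapsto \ell(\gamma(t))$ is continuous, with $\ell(\gamma(0)) = \ell(0) = 0$ by $(i)$, and $\ell(\gamma(1)) = \ell(e) \ge \|e\|_W > r_0$ by $(i)$ and $(iii)$. By the intermediate value theorem there is $t_\gamma \in (0,1)$ with $\ell(\gamma(t_\gamma)) = r_0$, and then hypothesis $(ii)$ gives $J(\gamma(t_\gamma)) \ge \varrho_0$. Hence $\max_{[0,1]} J \circ \gamma \ge \varrho_0$ for every $\gamma$, so $c \ge \varrho_0 > 0$; in particular $c \in \R_+$, where the $(wCPS)$ condition is assumed to hold. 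Also $c < +\infty$ since $\max_t J(te)$ is finite.

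The third step is the standard deformation argument showing $K_J^c \neq \emptyset$. Suppose not, i.e. $K_J^c = \emptyset$. Since $J$ satisfies $(wCPS)_c$ (as $c > 0$), apply Lemma \ref{def} with, say, $\bar\eps = c$ (or any $\bar\eps \in (0,c)$, which also guarantees that points of the form $te$ on which $J$ might be large are untouched and that $0$, where $J = 0 \le c - \bar\eps$, is fixed); we obtain $\eps \in (0,\bar\eps/2)$ and a homeomorphism $\psi : X \to X$ with $\psi(J^{c+\eps}) \subset J^{c-\eps}$ and $\psi = \mathrm{id}$ on $\{J \le c - \bar\eps\} \cup \{J \ge c + \bar\eps\}$. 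Choosing $\gamma \in \Gamma$ with $\max_t J(\gamma(t)) \le c + \eps$ and replacing it by $\psi \circ \gamma$: this is again a path in $C([0,1],X)$ since $\psi$ is continuous on $X$; its endpoints are unchanged because $J(0) = 0 \le c - \bar\eps$ and $J(e) < \varrho_0 \le c$, and since $\bar\eps < c$ we may in fact take $\bar\eps$ small enough (or note $\varrho_0 \le c$ forces nothing on the upper side, so instead use that $e$ is below level $c$, hence at worst we must also know $J(e) \le c - \bar\eps$; to be safe pick $\bar\eps \le c - J(e)$ as well, legitimate since $J(e) < \varrho_0 \le c$). Then $\psi \circ \gamma \in \Gamma$ and $\max_t J(\psi(\gamma(t))) \le c - \eps$, contradicting the definition of $c$. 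Therefore $K_J^c \neq \emptyset$: there is $u_0 \in X$ with $dJ(u_0) = 0$ and $J(u_0) = c \ge \varrho_0$, which is the asserted Mountain Pass critical point.

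The main obstacle — really the only delicate point, since the abstract skeleton is classical — is bookkeeping the choice of $\bar\eps$ so that the deformation $\psi$ genuinely fixes both endpoints of every competitor path: one needs $0$ and $e$ to lie in the region $\{J \le c - \bar\eps\}$, which requires $0 < \bar\eps \le c - \max\{J(0), J(e)\} = c - \max\{0, J(e)\}$; this is a strictly positive quantity because $J(e) < \varrho_0 \le c$ and $c > 0$, so such $\bar\eps$ exists and the argument closes. A secondary point worth a line is that the continuity of $\gamma \mapsto \psi\circ\gamma$ and of $t \mapsto J(\gamma(t))$ is exactly what forces the minimax class to be built from $\|\cdot\|_X$-continuous paths, which is consistent with Lemma \ref{def} producing a homeomorphism of $(X,\|\cdot\|_X)$; the weak norm enters only through the geometric hypotheses $(i)$–$(iii)$ via $\ell$, and the intermediate value step above is where that coupling is used.
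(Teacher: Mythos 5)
Your proposal is correct and follows exactly the route the paper intends: the minimax value over $\|\cdot\|_X$-continuous paths joining $0$ to $e$, the lower bound $c\ge\varrho_0$ obtained by applying the intermediate value theorem to $t\mapsto\ell(\gamma(t))$ (which is where $(i)$--$(iii)$ and the domination $\ell(u)\ge\|u\|_W$ enter), and the contradiction via the Deformation Lemma \ref{def} with $\bar\eps\le c-\max\{0,J(e)\}$ so that both endpoints are fixed by $\psi$. The paper itself only sketches this by reference to \cite[Theorem 1.7]{CP3}, and your bookkeeping of $\bar\eps$ and of which norm governs the continuity of the paths is exactly the right way to make that sketch precise.
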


Furthermore, with the stronger assumption that $J$ is symmetric, 
 the following generalization of the symmetric Mountain Pass Theorem can be stated
(see \cite[Theorem 1.8]{CP3} and compare with \cite[Theorem 9.12]{Ra1} and 
\cite[Theorem 2.4]{BBF}).

\begin{theorem}
\label{abstract}
Let $J\in C^1(X,\R)$ be an even functional such that $J(0) = 0$ and
the $(wCPS)$ condition holds in $\R_+$.
Moreover, assume that $\varrho > 0$ exists so that:
\begin{itemize}
\item[$({\cal H}_{\varrho})$]
three closed subsets $V_\varrho$, $Z_\varrho$ and $\M_\varrho$ of $X$ and a constant
$R_\varrho > 0$ exist which satisfy the following conditions:
\begin{itemize}
\item[$(i)$] $V_\varrho$ and $Z_\varrho$ are subspaces of $X$ such that
\[
V_\varrho + Z_\varrho = X,\qquad \codim Z_\varrho\ <\ \dim V_\varrho\ <\ +\infty;
\]
\item[$(ii)$] $\M_\varrho = \partial \enne$, where $\enne \subset X$ is a neighborhood of the origin
which is symmetric and bounded with respect to $\|\cdot\|_W$; 
\item[$(iii)$]  $\ u \in \M_\varrho \cap Z_\varrho\qquad \then\qquad J(u) \ge \varrho$;
\item[$(iv)$] $\ u \in V_\varrho, \quad \|u\|_X \ge R_\varrho \qquad \then\qquad J(u) \le 0$.
\end{itemize}
\end{itemize}
Then, if we put 
\[
\beta_\varrho\ =\ \inf_{\gamma \in \Gamma_\varrho} \sup_{u\in V_\varrho} J(\gamma(u)),
\]
with 
\[
\Gamma_\varrho\ =\ \{\gamma : X \to X:\
\gamma\ \hbox{odd homeomeorphism,}\quad \gamma(u) = u \ \hbox{if $u \in V_\varrho$ with $\|u\|_X \ge R_\varrho$}\},
\]
the functional $J$ possesses at least a pair of symmetric critical points in $X$ 
with corresponding critical level $\beta_\varrho$ which belongs to $[\varrho,\varrho_1]$,
where $\varrho_1 \ge \displaystyle \sup_{u \in V_\varrho}J(u) > \varrho$.
\end{theorem}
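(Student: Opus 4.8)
The plan is to deduce Theorem~\ref{abstract} from the Deformation Lemma~\ref{def} in the classical minimax fashion, but adapted to the two-norm setting. First I would check that the minimax class $\Gamma_\varrho$ is nonempty (it contains the identity) and that $\beta_\varrho$ is well defined and finite: the upper bound $\beta_\varrho \le \sup_{u \in V_\varrho} J(\gamma_{\mathrm{id}}(u)) = \sup_{u\in V_\varrho}J(u) =: \varrho_1 < +\infty$ comes from testing with $\gamma = \mathrm{id}$, using that $J$ is continuous on the finite-dimensional space $V_\varrho$ and, by hypothesis $(iv)$, is $\le 0$ outside the ball $\bar B^X_{R_\varrho} \cap V_\varrho$, so the supremum is attained on a compact set. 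The quantity $\varrho_1$ is $> \varrho$ since, as we will see, $\beta_\varrho \ge \varrho$.

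The heart of the argument is the \emph{intersection lemma}: for every $\gamma \in \Gamma_\varrho$ one has $\gamma(V_\varrho) \cap \M_\varrho \cap Z_\varrho \neq \emptyset$, whence by $(iii)$ there is some $u \in V_\varrho$ with $J(\gamma(u)) \ge \varrho$, giving $\sup_{u\in V_\varrho}J(\gamma(u)) \ge \varrho$ and therefore $\beta_\varrho \ge \varrho$. To prove this intersection claim I would use a Borsuk--Ulam / Krasnoselskii genus type argument: consider the odd map on $V_\varrho$ obtained by composing $\gamma|_{V_\varrho}$ with a continuous odd projection-type map onto a complement of $Z_\varrho$ and with the radial retraction associated to the symmetric bounded neighborhood $\enne$ (so that $\M_\varrho = \partial\enne$ plays the role of a sphere); since $\gamma$ is the identity on $V_\varrho \cap \{\|u\|_X \ge R_\varrho\}$ and $\dim V_\varrho > \codim Z_\varrho$, a degree/genus count forces a zero, i.e. a point of $V_\varrho$ whose image under $\gamma$ lands in $\M_\varrho \cap Z_\varrho$. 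This is the step I expect to be the main obstacle, because one must handle the nonsmooth set $\M_\varrho$, the two norms (boundedness of $\enne$ is with respect to $\|\cdot\|_W$ while $R_\varrho$ refers to $\|\cdot\|_X$), and check that all auxiliary maps are odd and continuous so that topological-degree arguments apply; I would isolate it as a separate lemma, modeled on the corresponding step in \cite[Theorem 1.8]{CP3}.

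Granting the intersection lemma, $\beta_\varrho \in [\varrho, \varrho_1]$ is established. It remains to show $\beta_\varrho$ is a critical level with a symmetric pair of critical points. Suppose not: then $K_J^{\beta_\varrho} = \emptyset$, and since the $(wCPS)$ condition holds on $\R_+ \ni \beta_\varrho$, Lemma~\ref{def} applies (in its even form, since $J$ is even), producing $\eps \in (0, \varrho)$ and an \emph{odd} homeomorphism $\psi : X \to X$ with $\psi(J^{\beta_\varrho+\eps}) \subset J^{\beta_\varrho - \eps}$ and $\psi = \mathrm{id}$ on $\{J \le \beta_\varrho - \bar\eps\} \cup \{J \ge \beta_\varrho + \bar\eps\}$, where I choose $\bar\eps$ small enough (e.g. $\bar\eps < \beta_\varrho$, so in particular $\bar\eps < \varrho_1$ is not needed but $\psi$ fixes the region where $J \le 0$, hence fixes $V_\varrho \cap \{\|u\|_X \ge R_\varrho\}$ by $(iv)$). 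Pick $\gamma \in \Gamma_\varrho$ with $\sup_{u\in V_\varrho}J(\gamma(u)) < \beta_\varrho + \eps$; then $\psi \circ \gamma$ is again an odd homeomorphism equal to the identity on $V_\varrho \cap \{\|u\|_X \ge R_\varrho\}$, so $\psi\circ\gamma \in \Gamma_\varrho$, yet $\sup_{u\in V_\varrho}J(\psi(\gamma(u))) \le \beta_\varrho - \eps$, contradicting the definition of $\beta_\varrho$ as an infimum. Hence $K_J^{\beta_\varrho} \neq \emptyset$, and because $J$ is even and $0 \notin K_J^{\beta_\varrho}$ (as $\beta_\varrho \ge \varrho > 0 = J(0)$), the critical points come in symmetric pairs $\{u_0, -u_0\}$, completing the proof.
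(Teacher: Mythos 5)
Your proposal follows essentially the same route as the paper: the paper also reduces the theorem to the Deformation Lemma~\ref{def} (in its odd form) together with a separate Intersection Lemma (Lemma~\ref{int}), the latter proved by a Krasnoselskii-genus count showing $i_2(\gamma(V_\varrho)\cap\M_\varrho\cap Z_\varrho)\ge \dim V_\varrho-\codim Z_\varrho\ge 1$, and then runs the standard minimax contradiction with $\psi\circ\gamma$ exactly as you describe. The only part you leave as a sketch --- the genus argument behind the intersection property --- is precisely the content of the paper's Lemma~\ref{int}, so your outline is correct and matches the paper's proof.
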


\begin{remark}
Since in Theorem \ref{abstract} the vector space $V_\varrho$ is finite dimensional,
then condition $({\cal H}_{\varrho})(iv)$ implies that $\displaystyle \sup_{u \in V_\varrho}J(u) < +\infty$,
furthermore it still holds if we replace $\|\cdot\|_X$ with $\|\cdot\|_W$.
\end{remark}

If we can apply infinitely many times Theorem \ref{abstract}, then the following multiplicity abstract 
result can be stated.

\begin{corollary}
\label{multiple}
Let $J \in C^1(X,\R)$ be an even functional such that $J(0) = 0$,
the $(wCPS)$ condition holds in $\R_+$ and 
assumption $({\cal H}_{\varrho})$ holds for all $\varrho > 0$.\\
Then, the functional $J$ possesses a sequence of critical points $(u_n)_n \subset X$ such that
$J(u_n) \nearrow +\infty$ as $n \nearrow +\infty$.
\end{corollary}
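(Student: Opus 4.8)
The plan is to iterate Theorem \ref{abstract} along a sequence of parameters $\varrho \to +\infty$ and then to harvest the resulting critical levels. First I would fix, for every $k \in \N$, the value $\varrho = k$: since assumption $({\cal H}_{\varrho})$ is supposed to hold for all $\varrho > 0$, in particular it holds for $\varrho = k$, and since the $(wCPS)$ condition is assumed globally in $\R_+$, Theorem \ref{abstract} applies with no further verification needed and produces a pair of symmetric critical points of $J$ in $X$ whose common critical level $\beta_{k}$ satisfies $\beta_{k} \in [k,\varrho_1^{(k)}]$ for a suitable $\varrho_1^{(k)} \ge \sup_{u \in V_{k}} J(u) > k$, where $V_{k}$ is the finite dimensional subspace associated with $\varrho = k$ by $({\cal H}_{\varrho})$.

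Next I would observe that the lower bound $\beta_{k} \ge k$ forces $\beta_{k} \to +\infty$ as $k \to +\infty$, so the set of critical levels $\{\beta_{k} : k \in \N\}$ obtained in the previous step is unbounded from above. Passing to a subsequence I can therefore extract indices $k_1 < k_2 < \dots$ with strictly increasing levels $\beta_{k_1} < \beta_{k_2} < \dots$ and $\beta_{k_j} \to +\infty$; for each $j$ I pick one of the two critical points furnished by Theorem \ref{abstract} at level $\beta_{k_j}$, say $u_j \in X$, so that $dJ(u_j) = 0$ and $J(u_j) = \beta_{k_j}$. Because distinct critical levels are attained by distinct points, the sequence $(u_j)_j$ consists of pairwise distinct critical points of $J$ with $J(u_j) = \beta_{k_j} \nearrow +\infty$, which is exactly the assertion after relabelling $u_j$ as $u_n$.

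The only point calling for a little care — and it is bookkeeping rather than a genuine obstacle — is guaranteeing that the extracted critical points are truly distinct: a priori two different choices of $\varrho$ might produce the same critical level (hence possibly the same point), but the estimate $\beta_\varrho \ge \varrho$ rules this out along any sequence $\varrho \to +\infty$ since the levels cannot remain bounded, which is precisely why the strictly increasing extraction above is available. No compactness or deformation argument beyond what is already encapsulated in Theorem \ref{abstract} is required, so the proof is short.
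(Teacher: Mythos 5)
Your proposal is correct and follows exactly the route the paper intends: the paper gives no separate proof of Corollary \ref{multiple} beyond the remark that it follows by applying Theorem \ref{abstract} infinitely many times, and your iteration over $\varrho = k$ together with the lower bound $\beta_k \ge k$ and the extraction of a strictly increasing subsequence of levels is precisely that argument, carried out carefully.
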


The proof of Theorem \ref{abstract} is obtained reasoning as in \cite[Theorem 1.8]{CP3}
by using Lemma \ref{def} and the following result.

\begin{lemma}[Intersection Lemma]\label{int}
Let $V$, $Z$ and $\M$ be closed subsets of $X$ which satisfy conditions
$(i)$ and $(ii)$ in Theorem \ref{abstract}. 
Fixing $R > 0$ and defining
\[
\Gamma_R = \{\gamma : X \to X:\
\gamma\ \hbox{odd homeomeorphism,}\quad \gamma(u) = u \ \hbox{if $u \in V$ with $\|u\|_X \ge R$}\},
\]
then
\[
\gamma(V) \cap \M \cap Z \ne \emptyset\qquad \hbox{for all $\gamma \in \Gamma_R$.}
\]
\end{lemma}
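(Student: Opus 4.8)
The plan is to deduce the Intersection Lemma from a Borsuk--Ulam type argument, exactly as in the classical symmetric Mountain Pass Theorem, being careful that everything takes place in the finite--dimensional space $V$ (on which the two norms $\|\cdot\|_X$ and $\|\cdot\|_W$ are equivalent) so that the topological degree is well defined. Fix $\gamma \in \Gamma_R$ and suppose, for contradiction, that $\gamma(V) \cap \M \cap Z = \emptyset$. The first step is to reduce the problem to a finite--dimensional one: since $\codim Z < \dim V =: d < +\infty$, one can write $Z$ as the kernel of a continuous linear map $L : X \to \R^m$ with $m = \codim Z < d$; then $\M \cap Z = \{u \in \M : Lu = 0\}$, and hitting $\M \cap Z$ by $\gamma(V)$ is equivalent to the vector field $u \mapsto (Lu, \text{(defining function of }\enne))$ having a zero on $\gamma(V)$.

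More precisely, the second step is to set up the odd map whose zero is forbidden. Because $\enne$ is a symmetric bounded open neighbourhood of the origin with respect to $\|\cdot\|_W$ and $\M = \partial\enne$, the standard Minkowski--type gauge gives a continuous even function $\mu : X \to [0,+\infty)$ with $\mu(u) = 1$ exactly on $\M$, $\mu(u) < 1$ inside $\enne$; on the finite--dimensional subspace $V$ this is just the gauge of a bounded symmetric neighbourhood of $0$. Consider the odd continuous map
\[
\Phi : V \to \R^m \times \R, \qquad \Phi(u) = \bigl(L(\gamma(u)),\ \mu(\gamma(u)) - 1\bigr).
\]
The assumption $\gamma(V)\cap\M\cap Z=\emptyset$ says precisely that $\Phi(u) \ne 0$ for every $u \in V$. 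Since $\dim V = d > m = \codim Z$, we have $d > m+1-1 = m$, hence the target $\R^{m+1}$ has dimension $m+1 \le d$; the Borsuk--Ulam theorem (in the form: an odd continuous map from the boundary of a symmetric bounded neighbourhood of $0$ in $\R^d$ into $\R^{d-1}$ must vanish, or equivalently via $\Z_2$--genus) forces $\Phi$ to have a zero on a suitable symmetric sphere in $V$, provided we know $\Phi$ is nonzero there for ``large'' $u$ so that a degree computation applies.

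The third step, therefore, is to control $\Phi$ on large spheres of $V$ in order to run the degree argument. For $u \in V$ with $\|u\|_X \ge R$ the boundary condition in the definition of $\Gamma_R$ gives $\gamma(u) = u$, so $\Phi(u) = (Lu, \mu(u)-1)$; choosing the radius $R'$ of the sphere large enough (using that $\enne$ is $\|\cdot\|_W$--bounded and $V$ is finite dimensional, hence $\mu(u) \to +\infty$ as $\|u\|_X \to +\infty$ on $V$), we get $\mu(u) > 1$ on $S^X_{R'}\cap V$, so in particular $\Phi$ restricted to that sphere agrees with the fixed odd map $u \mapsto (Lu,\mu(u)-1)$, whose Brouwer degree (with respect to $0$ on the ball $\bar B^X_{R'}\cap V$) is odd by Borsuk's theorem, since $m+1 \le d$ means the map $u\mapsto(Lu,\mu(u)-1)$ from a $d$--ball to $\R^{m+1}$ can be further projected but, more cleanly, one argues genus: $\gamma(V)$ is a symmetric compact set not meeting $\M\cap Z$, and an intersection--type lemma (essentially \cite[Lemma 9.?]{Ra1} or the standard linking via genus) yields the contradiction with $\codim Z < \dim V$. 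Homotopy invariance of the degree along $t \mapsto \Phi_t(u) = ( L(\gamma_t(u)), \mu(\gamma_t(u))-1)$ — where $\gamma_t$ interpolates between $\gamma$ and the identity and stays admissible — shows $\deg(\Phi, \bar B^X_{R'}\cap V, 0)$ equals this odd number, hence is nonzero, so $\Phi$ must vanish somewhere in $\bar B^X_{R'}\cap V$, contradicting $\Phi \ne 0$ on all of $V$. This proves $\gamma(V)\cap\M\cap Z \ne \emptyset$.

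The main obstacle I expect is the bookkeeping that makes the degree/genus argument legitimate despite the presence of two inequivalent norms: the map $\mu$ is continuous for $\|\cdot\|_W$ but we deform and take degrees with respect to the $X$--structure, and $\enne$ is only $\|\cdot\|_W$--bounded, so one must verify carefully that $\mu$ is nonetheless $\|\cdot\|_X$--continuous (it is, since $X \hookrightarrow W$) and that, restricted to the finite--dimensional $V$, the sublevel $\{\mu \le 1\}\cap V$ is $\|\cdot\|_X$--bounded (again automatic by norm equivalence on $V$). Once these compatibility points are settled, the argument is the classical Borsuk--Ulam intersection lemma verbatim; the only genuinely new ingredient is checking that the restriction to $V$ neutralises the norm mismatch.
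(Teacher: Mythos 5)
Your overall strategy (reduce to the finite--dimensional space $V$, where the two norms are equivalent, and then run a Borsuk--Ulam type argument on an odd map built from a linear functional vanishing on $Z$ and from the boundary of $\enne$) is the classical route and is different in packaging from the paper, which instead computes Krasnoselskii genera directly: the paper shows $Q=\gamma(V)\cap\M\cap Z$ is compact and symmetric with $0\notin Q$, uses $i_2(\gamma(V)\cap\M)=i_2(V\cap\gamma^{-1}(\M))=\dim V$ (since $V\cap\gamma^{-1}(\M)$ bounds a bounded symmetric neighborhood of $0$ in $V$, by \cite[Proposition 5.2]{Str}), and concludes by subadditivity that $i_2(Q)\ge \dim V-\codim Z\ge 1$. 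Your route could in principle give the same conclusion, but as written it has three concrete gaps.

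First, the gauge. Hypothesis $({\cal H}_\varrho)(ii)$ only says that $\enne$ is a symmetric, $\|\cdot\|_W$--bounded neighborhood of the origin; it is not assumed to be star--shaped (nor convex), so its Minkowski gauge need not be continuous and need not satisfy $\mu(u)=1$ exactly on $\M=\partial\enne$. Without that, a zero of your map $\Phi$ need not produce a point of $\M$, and the whole construction collapses. The standard repair is to drop the gauge entirely and work with the open set $\mathcal{O}=\{u\in V:\ \gamma(u)\in\enne\}$, a bounded symmetric neighborhood of $0$ in $V$ (bounded precisely because $\gamma=\mathrm{id}$ on $\{u\in V:\|u\|_X\ge R\}$ and $\enne$ is $\|\cdot\|_W$--bounded, with norm equivalence on $V$), whose topological boundary satisfies $\gamma(\partial\mathcal{O})\subset\M$. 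Second, the degree computation is wrong in the generic case: your $\Phi$ maps $V\cong\R^d$ into $\R^{m+1}$ with $m=\codim Z$, and when $m+1<d$ the image lies in a proper subspace, so $\deg(\Phi,B,0)=0$ (points off that subspace arbitrarily close to $0$ are not attained), not an odd number; Borsuk's degree theorem only applies in the equidimensional case $m+1=d$. The correct tool under the sole hypothesis $\codim Z<\dim V$ is the Borsuk--Ulam theorem applied to the restriction of the odd map $u\mapsto L(\gamma(u))\in\R^m$ to $\partial\mathcal{O}$: an odd continuous map from the boundary of a bounded symmetric neighborhood of $0$ in $\R^d$ into $\R^m$ with $m<d$ must vanish, and the extra coordinate $\mu-1$ is then unnecessary. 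Third, at the decisive moment you write that ``an intersection--type lemma \dots yields the contradiction'': that is precisely the statement being proved, so this step is circular as it stands. With the first two repairs the argument closes without it; the paper's genus formulation avoids all three pitfalls at once, which is presumably why it was chosen.
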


\begin{proof}
Fixing any $\gamma \in \Gamma_R$, for simplicity we denote $Q = \gamma(V) \cap \M \cap Z$.
It is enough to prove that
\begin{equation}\label{kra2}
i_2(Q) \ge \dim V - \codim Z \ge 1,
\end{equation}
where $i_2(\cdot)$ is the Krasnoselskii genus (see, e.g., \cite[Section II.5]{Str}).\\
In order to prove \eqref{kra2}, firstly let us point out that hypotheses $(i)$ and $(ii)$
imply that $Q$ is symmetric with respect to the origin
but $0 \not\in Q$. Moreover, $Q$ is compact in $X$.
In fact, we have $V = (V \cap \bar{B}_R^X) \cup (V\setminus B_R^X)$,
with $V \cap \bar{B}_R^X$ compact (as $\dim V < +\infty$) and
$\gamma(V\setminus B_R^X) = V\setminus B_R^X$ (by the definition of $\Gamma_R$).
Hence, $Q = (\gamma(V \cap \bar{B}_R^X) \cap \M \cap Z)
\cup ((V \setminus B_R^X) \cap \M \cap Z)$ is compact because
$\gamma(V \cap \bar{B}_R^X) \cap \M \cap Z$ is compact (as closed subset of the compact set $\gamma(V \cap \bar{B}_R^X)$)
and $(V \setminus B_R^X) \cap \M \cap Z$ is compact, too, as closed and bounded in the finite dimensional space $V$
(since $\M$ is bounded in $\|\cdot\|_W$ but in $V$ the norms $\|\cdot\|_X$
and $\|\cdot\|_W$ are equivalent).\\
Then, by the continuity, monotonicity and subadditivity properties of the genus, 
an open neighborhood $U$ of $Q$ in $X$ exists such that 
\begin{equation}\label{kra3}
i_2(Q) = i_2(\bar{U}) \ge i_2(\gamma(V) \cap \M \cap \bar{U}) \ge 
i_2(\gamma(V) \cap \M) - i_2(\gamma(V) \cap (\M \setminus U)).
\end{equation}
Now, denoting by $V^*$ the complement of $Z$, from hypothesis $(i)$ it follows that
$V^* \subset V$; furthermore, it has to be $\gamma(V) \cap (\M \setminus U) \subset V^* \setminus \{0\}$,
hence 
\begin{equation}\label{kra4}
i_2(\gamma(V) \cap (\M \setminus U)) \le \dim V^* = \codim Z.
\end{equation}
On the other hand, since $\gamma$ is an odd homeomorphism on $X$, assumption $(ii)$ implies that
the set $V \cap \gamma^{-1}(\M)$ is the boundary of a bounded symmetric neighborhood of the origin in $V$.
Then, from \cite[Proposition 5.2]{Str} we have
\[
i_2(\gamma(V) \cap \M) = i_2(V \cap \gamma^{-1}(\M)) =  \dim V, 
\]  
which, together with \eqref{kra3} and \eqref{kra4}, implies \eqref{kra2}.
\end{proof}


\section{Variational setting and first properties}
\label{variational}

From now on, let $\Omega \subset \R^N$ be an open bounded domain, $N\ge 2$,
so we denote by:
\begin{itemize}
\item $L^q(\Omega)$ the Lebesgue space with
norm $|u|_q = \left(\int_\Omega|u|^q dx\right)^{1/q}$ if $1 \le q < +\infty$;
\item $L^\infty(\Omega)$ the space of Lebesgue--measurable 
and essentially bounded functions $u :\Omega \to \R$ with norm
\[
|u|_{\infty} = \esssup_{\Omega} |u|;
\]
\item $W^{1,p}_0(\Omega)$ the classical Sobolev space with
norm $\|u\|_{W} = |\nabla u|_p$ if $1 \le p < +\infty$;
\item $|C|$ the usual Lebesgue measure of a measurable set $C$ in $\R^N$.
\end{itemize}

From now on, let $\, A : \Omega \times \R \times \R^N \to \R\,$
and $\, g :\Omega \times \R \to \R\,$ be such that, considering the notation in \eqref{grad},
the following conditions hold:
\begin{itemize}
\item[$(H_0)$]
$A(x,t,\xi)$ is a $C^1$ Carath\'eodory function, i.e., \\
$A(\cdot,t,\xi) : x \in \Omega \mapsto A(x,t,\xi) \in \R$ is measurable for all $(t,\xi) \in \R\times \R^N$,\\
$A(x,\cdot,\cdot) : (t,\xi) \in \R\times \R^N \mapsto A(x,t,\xi) \in \R$ 
is $C^1$ for a.e. $x \in \Omega$;
\item[$(H_1)$] a real number $p > 1$ and 
some positive continuous 
functions $\Phi_i$, $\phi_i : \R \to \R$, $i \in \{1,2\}$, exist such that
\[
\begin{array}{ccll}
|A_t(x,t,\xi)| &\le& \Phi_1(t) + \phi_1(t)\ |\xi|^p 
& \hbox{a.e. in $\Omega$, for all $(t,\xi) \in \R\times \R^N$,}\\
|a(x,t,\xi)| &\le& \Phi_2(t) + \phi_2(t)\ |\xi|^{p-1}
& \hbox{a.e. in $\Omega$, for all $(t,\xi) \in \R\times \R^N$;}
\end{array}
\]
\item[$(G_0)$] 
$g(x,t)$ is a Carath\'eodory function, i.e.,\\
$g(\cdot,t) : x \in \Omega \mapsto g(x,t) \in \R$ is measurable for all $t \in \R$;\\
$g(x,\cdot) : t \in \R \mapsto g(x,t) \in \R$ is continuous for a.e. $x \in \Omega$;
\item[$(G_1)$] $a_1$, $a_2 > 0$ and $q \ge 1$ exist such that
\[
|g(x,t)| \le a_1 + a_2 |t|^{q-1} \qquad
\hbox{a.e. in $\Omega$, for all $t \in \R$.}
\]
\end{itemize}

\begin{remark}
From $(G_1)$ it follows that there exist $a_3$, $a_4 > 0$ such that
\begin{equation}
\label{alto3}
|G(x,t)| \le a_3 + a_4 |t|^q\qquad\hbox{a.e in $\Omega$, for all $t \in \R$.}
\end{equation}
We note that, unlike assumption $(G_1)$ in \cite{CP2}, no upper bound on $q$ is actually required. 
\end{remark}

In order to investigate the existence of weak solutions  
of the nonlinear problem \eqref{euler}, the notation introduced for the abstract 
setting at the beginning of Section \ref{abstractsection}
is referred to our problem with $W = W^{1,p}_0(\Omega)$ and the
Banach space $(X,\|\cdot\|_X)$ defined as
\begin{equation}\label{space}
X := W^{1,p}_0(\Omega) \cap L^\infty(\Omega),\qquad
\|u\|_X = \|u\|_W + |u|_\infty
\end{equation}
(here and in the following, $|\cdot|$ denotes 
the standard norm on any Euclidean space as the dimension
of the considered vector is clear and no ambiguity arises).\\
Moreover, from the Sobolev Embedding Theorem, for any $r \in [1,p^*[$,
$p^* = \frac{pN}{N-p}$ as $N > p$,
a constant $\sigma_r > 0$ exists, such that 
\[
|u|_r\ \le\ \sigma_r \|u\|_W \quad \hbox{for all $u \in W^{1,p}_0(\Omega)$}
\]
and the embedding $W^{1,p}_0(\Omega) \hookrightarrow\hookrightarrow L^r(\Omega)$
is compact.

From the definition of $X$, we have that $X \hookrightarrow W^{1,p}_0(\Omega)$ and $X \hookrightarrow L^\infty(\Omega)$
with continuous embeddings, and \eqref{continuity} holds with $\sigma_0 = 1$. 
If $p > N$ then $X = W^{1,p}_0(\Omega)$, as $W^{1,p}_0(\Omega) \hookrightarrow L^\infty(\Omega)$; hence, classical 
Mountain Pass Theorems in \cite{AR} can be used.

Now, we consider the functional $\J : X \to \R$ defined as
\begin{equation}
\label{funct}
\J(u)\ =\ \int_\Omega A(x,u,\nabla u) dx - \int_\Omega G(x,u) dx,\qquad
u \in X.
\end{equation}

Taking any $u$, $v\in X$, by direct computations
it follows that its G\^ateaux differential in $u$ along the direction $v$ is
\begin{equation}
\label{diff}
\langle d\J(u),v\rangle = \int_\Omega (a(x,u,\nabla u)\cdot \nabla v + A_t(x,u,\nabla u)v) dx
- \int_\Omega g(x,u)v dx .
\end{equation}

The following proposition extends \cite[Proposition 3.1]{CP2} in which the regularity 
of $\J$ is stated only if $G(x,t)$ has a subcritical growth. 

\begin{proposition}\label{smooth1}
Let us assume that 
conditions $(H_0)$--$(H_1)$, $(G_0)$--$(G_1)$ hold and 
two positive continuous functions $\Phi_0$, $\phi_0 : \R \to \R$ exist such that 
\begin{equation}\label{uno}
|A(x,t,\xi)| \le \Phi_0(t) + \phi_0(t)\ |\xi|^p\quad\hbox{a.e. in $\Omega$,
for all $(t,\xi) \in \R\times \R^N$.}
\end{equation}
If $(u_n)_n \subset X$, $u \in X$ are such that
\begin{equation}\label{succ1}
\|u_n - u\|_W \to 0, \; u_n \to u\; \hbox{a.e. in $\Omega$} \ \quad\hbox{if $n \to+\infty$}
\end{equation}
\begin{equation}\label{succ2}
\hbox{and $M > 0$ exists so that $|u_n|_\infty \le M$ for all $n \in \N$,}
\end{equation}
then
\[
\J(u_n) \to \J(u)\quad \hbox{and}\quad \|d\J(u_n) - d\J(u)\|_{X'} \to 0
\quad\hbox{if $\ n\to+\infty$.}
\]
Hence, $\J$ is a $C^1$ functional on $X$ with Fr\'echet differential  
defined as in \eqref{diff}.
\end{proposition}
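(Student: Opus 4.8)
The plan is to prove continuity of $\J$ and of $d\J:X\to X'$ along sequences satisfying \eqref{succ1}--\eqref{succ2}, which by the linearity of the differential formula \eqref{diff} and a standard argument yields that $\J\in C^1(X,\R)$. The key technical point is that although $(u_n)_n$ need only converge in $W^{1,p}_0(\Omega)$, the uniform $L^\infty$ bound \eqref{succ2} lets us control all the terms involving the continuous functions $\Phi_i,\phi_i,\Phi_0,\phi_0$: since $|u_n|\le M$ a.e., we may replace each $\Phi_i(u_n)$ and $\phi_i(u_n)$ by the constants $\max_{[-M,M]}\Phi_i$ and $\max_{[-M,M]}\phi_i$, reducing every growth bound to the form ``constant $+$ constant$\cdot|\nabla u_n|^p$'' (or $|\nabla u_n|^{p-1}$), which is exactly what is needed for domination by $W^{1,p}$-convergent data.

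First I would handle $\J(u_n)\to\J(u)$. Passing to a subsequence, \eqref{succ1} gives $\nabla u_n\to\nabla u$ in $L^p(\Omega;\R^N)$ and hence, up to a further subsequence, $\nabla u_n\to\nabla u$ a.e.\ and $|\nabla u_n|\le h$ a.e.\ for some $h\in L^p(\Omega)$. Then $A(x,u_n,\nabla u_n)\to A(x,u,\nabla u)$ a.e.\ by the Carath\'eodory continuity in $(H_0)$, and by \eqref{uno} together with $|u_n|_\infty\le M$ we get the domination $|A(x,u_n,\nabla u_n)|\le \Phi_0^M + \phi_0^M h^p\in L^1(\Omega)$ where $\Phi_0^M=\max_{[-M,M]}\Phi_0$, $\phi_0^M=\max_{[-M,M]}\phi_0$; dominated convergence gives $\int_\Omega A(x,u_n,\nabla u_n)\,dx\to\int_\Omega A(x,u,\nabla u)\,dx$. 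For the lower-order term, \eqref{alto3} and $|u_n|_\infty\le M$ give $|G(x,u_n)|\le a_3+a_4M^q\in L^1(\Omega)$, and $G(x,u_n)\to G(x,u)$ a.e.\ by $(G_0)$, so again dominated convergence applies. A subsequence-of-every-subsequence argument then upgrades this to convergence of the whole sequence.

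Next I would treat $\|d\J(u_n)-d\J(u)\|_{X'}\to 0$. Fix $v\in X$ with $\|v\|_X\le 1$, so in particular $|\nabla v|_p\le 1$ and $|v|_\infty\le 1$. From \eqref{diff},
\[
\langle d\J(u_n)-d\J(u),v\rangle=\int_\Omega (a(x,u_n,\nabla u_n)-a(x,u,\nabla u))\cdot\nabla v\,dx+\int_\Omega (A_t(x,u_n,\nabla u_n)-A_t(x,u,\nabla u))v\,dx-\int_\Omega(g(x,u_n)-g(x,u))v\,dx.
\]
By H\"older's inequality the first integral is bounded by $\big|a(x,u_n,\nabla u_n)-a(x,u,\nabla u)\big|_{p'}\,|\nabla v|_p\le\big|a(x,u_n,\nabla u_n)-a(x,u,\nabla u)\big|_{p'}$; the second by $|A_t(x,u_n,\nabla u_n)-A_t(x,u,\nabla u)|_1\,|v|_\infty\le|A_t(x,u_n,\nabla u_n)-A_t(x,u,\nabla u)|_1$; and the third by $|g(x,u_n)-g(x,u)|_1$. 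Since all these bounds are independent of $v$, it suffices to show each tends to $0$. Working along the fixed subsequence with $\nabla u_n\to\nabla u$ a.e.\ and $|\nabla u_n|\le h\in L^p(\Omega)$: by $(H_0)$ the integrands converge to $0$ a.e.; by $(H_1)$ and $|u_n|_\infty\le M$ we dominate $|a(x,u_n,\nabla u_n)|^{p'}\le C(1+h^{(p-1)p'})=C(1+h^p)\in L^1$ and $|A_t(x,u_n,\nabla u_n)|\le \Phi_1^M+\phi_1^M h^p\in L^1$ (constants depending on $M$), while for the $g$-term \eqref{alto3}'s companion bound $|g(x,u_n)|\le a_1+a_2M^{q-1}\in L^1(\Omega)$ and $(G_0)$ give a.e.\ convergence with an $L^1$ domination; so generalized dominated convergence (using that the dominating functions can be taken convergent in $L^1$, or simply the classical version with the fixed dominant $h$) forces each $L^1$- or $L^{p'}$-norm to $0$. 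The usual subsequence argument then yields convergence of the full sequence.

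The last step is to conclude $\J\in C^1(X,\R)$ with Fr\'echet differential \eqref{diff}. The formula \eqref{diff} defines, for each $u\in X$, a bounded linear functional $d\J(u)\in X'$ (boundedness follows from $(H_1)$, $(G_1)$ and the embeddings $X\hookrightarrow W^{1,p}_0(\Omega)$, $X\hookrightarrow L^\infty(\Omega)$, via the same H\"older estimates as above with $\|v\|_X\le 1$). For any $u\in X$ and any sequence $u_n\to u$ in $\|\cdot\|_X$, convergence in $X$ gives both $\|u_n-u\|_W\to 0$ and a uniform $L^\infty$ bound $|u_n|_\infty\le|u|_\infty+1=:M$, as well as (up to a subsequence) $u_n\to u$ a.e.; hence \eqref{succ1}--\eqref{succ2} hold and the part already proved gives $\|d\J(u_n)-d\J(u)\|_{X'}\to 0$, i.e.\ $u\mapsto d\J(u)$ is continuous from $X$ to $X'$. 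Combined with the fact that $d\J(u)$ is the G\^ateaux differential of $\J$ at $u$ (which is \eqref{diff}, valid by direct computation as recalled before the proposition), continuity of the G\^ateaux differential is a classical sufficient condition for Fr\'echet differentiability, so $\J\in C^1(X,\R)$. The main obstacle is purely in the convergence-of-differentials step: one must be careful that the $L^\infty$ bound is genuinely used to freeze the $t$-dependent coefficients $\Phi_i,\phi_i$ into constants before invoking dominated convergence, since without \eqref{succ2} these coefficients need not be controlled and the argument breaks down — this is precisely the phenomenon that motivates working in $X$ rather than $W^{1,p}_0(\Omega)$ alone.
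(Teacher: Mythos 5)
Your proof is correct and follows essentially the same route as the paper: the paper delegates the continuity of $u\mapsto\int_\Omega A(x,u,\nabla u)\,dx$ and of its differential to the first part of the proof of \cite[Proposition 3.1]{CP2} (which is exactly your dominated--convergence argument with the $L^\infty$ bound freezing $\Phi_i,\phi_i$ into constants and an $L^p$ dominant for the gradients), and treats the $G$--term by Lebesgue's theorem using $(G_1)$, \eqref{alto3} and \eqref{succ2}. You have merely written out in full what the paper cites, so there is nothing to correct.
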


\begin{proof}
As in the first part of the proof of \cite[Proposition 3.1]{CP2},
from assumptions $(H_0)$--$(H_1)$ and \eqref{uno} the functional
\[
\A : u \in X \ \mapsto\ \A(u) = \int_\Omega A(x,u,\nabla u) dx \in \R 
\]
is such that
$\A(u_n) \to \A(u)$ and $\|d\A(u_n) - d\A(u)\|_{X'} \to 0$,
with 
\[
\langle d\A(u),v\rangle\ =\ \int_\Omega a(x,u,\nabla u) \cdot \nabla v\ dx + 
\int_\Omega A_t(x,u,\nabla u) v dx, \quad
u, v \in X.
\]
On the other hand, from $(G_0)$ and \eqref{succ1} it follows that  
$G(x,u_n) \to G(x,u)$ and $g(x,u_n) \to g(x,u)$ a.e. in $\Omega$, then 
$(G_1)$, \eqref{alto3}, \eqref{succ2} and Lebesgue's Dominated Convergence Theorem imply
that also the functional
\[
{\cal G}: u \in X \ \mapsto\ {\cal G}(u) = \int_\Omega G(x,u) dx \in \R
\]
is such that ${\cal G}(u_n) \to {\cal G}(u)$ and $\|d{\cal G}(u_n) - d{\cal G}(u)\|_{X'} \to 0$,
with 
\[
\langle d{\cal G}(u),v\rangle \ =\ \int_\Omega g(x,u)v dx \quad \hbox{for all $u$, $v \in X$.}
\]
Then, the conclusion follows.
\end{proof}

In order to prove more properties of the functional $\J$ in \eqref{funct},
we require that some constants $\alpha_i > 0$,
$i \in \{1,2,3\}$, $\eta_j > 0$, $j\in\{1,2\}$, and $s \ge 0$, $\mu > p$, $R_0 \ge 1$, 
exist such that the following hypotheses are satisfied:
\begin{itemize}
\item[$(H_2)$] $\ A(x,t,\xi) \le \eta_1 a(x,t,\xi)\cdot \xi\quad$ a.e. in $\Omega\; $ if $|(t,\xi)| \ge R_0$;
\item[$(H_3)$] $\ |A(x,t,\xi)| \le \eta_2\quad$ a.e. in $\Omega\; $ if $|(t,\xi)| \le R_0$;
\item[$(H_4)$] $\ a(x,t,\xi)\cdot \xi \ge \alpha_1 (1 + |t|^{p s})|\xi|^p \quad$ 
a.e. in $\Omega$, $\;$ for all $(t,\xi) \in \R\times \R^N$;
\item[$(H_5)$] $\ a(x,t,\xi)\cdot \xi + A_t(x,t,\xi) t \ge \alpha_2 a(x,t,\xi)\cdot \xi\quad$
a.e. in $\Omega\;$ if $|(t,\xi)| \ge R_0$;
\item[$(H_6)$] $\ \mu A(x,t,\xi) - a(x,t,\xi)\cdot \xi - A_t(x,t,\xi) t \ge \alpha_3 a(x,t,\xi)\cdot \xi
\quad$ a.e. in $\Omega\;$ if $|(t,\xi)| \ge R_0$;
\item[$(H_7)$] $\ $ for all $\xi$, $\xi^* \in \R^N$, $\xi \ne \xi^*$, it is
\[
 [a(x,t,\xi) - a(x,t,\xi^*)]\cdot [\xi - \xi^*] > 0 \quad \hbox{a.e. in $\Omega$, for all $t\in \R$;}
\]
\item[$(G_2)$] $\ g(x,t)$ satisfies the Ambrosetti--Rabinowitz condition, i.e.
\[
 0 < \mu G(x,t) \le g(x,t) t\quad \hbox{for a.e. $x \in \Omega$ if $|t| \ge R_0$.}
\]
\end{itemize}

\begin{remark}\label{per1}
If in $(H_5)$ we take $t=0$ and $|\xi| \ge R_0$, we deduce that
$\alpha_2 \le 1$.\\ 
Moreover, from hypotheses $(H_5)$ and $(H_6)$ it follows that
\begin{equation}\label{tt5}
\mu A(x,t,\xi)\ \ge\ (\alpha_2 + \alpha_3)\ a(x,t,\xi)\cdot \xi
\quad\hbox{a.e. in $\Omega$ if $|(t,\xi)| \ge R_0$;}
\end{equation}
hence, if also $(H_4)$ holds, for a.e. $x \in\Omega$ we have that  
\begin{equation}\label{tt}
A(x,t,\xi)\ \ge\ \alpha_1\ \frac{\alpha_2 + \alpha_3}{\mu}\ (1 + |t|^{p s})\ |\xi|^p \ge 0
\quad\hbox{if $|(t,\xi)| \ge R_0$.}
\end{equation}
Thus, from \eqref{tt} and $(H_3)$, for a.e. $x \in\Omega$ we obtain that 
\begin{equation}\label{ttbis}
A(x,t,\xi)\ \ge\ \alpha_1\ \frac{\alpha_2 + \alpha_3}{\mu}\ (1 + |t|^{p s})\ |\xi|^p - \eta_3
\quad\hbox{for all $(t,\xi) \in \R\times\R^N$}
\end{equation}
for a suitable $\eta_3 > 0$.
\end{remark}

\begin{remark}\label{smooth}
From $(H_1)$--$(H_6)$, since \eqref{tt} is verified, then 
\begin{equation}\label{tt2t}
|A(x,t,\xi)|\ \le\ \eta_1\ (\Phi_2(t) + \phi_2(t))\ |\xi|^p\ + \ \eta_1 \Phi_2(t) + \eta_2 
\end{equation}
a.e. in $\Omega$, for all $(t,\xi) \in \R\times\R^N$. 
Whence, the growth condition \eqref{uno} holds and Proposition \ref{smooth1} applies.
\end{remark}

\begin{remark}\label{per2}
With respect to estimate \eqref{tt2t}, more precise growth conditions 
on $A(x,t,\xi)$ can be deduced.
In fact, taken $|(t,\xi)| \ge R_0$, hypotheses $(H_2)$ and $(H_6)$
imply
\[
\mu A(x,t,\xi)\ \ge\ \frac{1+\alpha_3}{\eta_1} A(x,t,\xi) + A_t(x,t,\xi) t\quad\hbox{a.e. in $\Omega$.}
\]
Hence, we have
\begin{equation}\label{tt1}
(\mu - \frac{1+\alpha_3}{\eta_1}) A(x,t,\xi) \ge A_t(x,t,\xi) t
\quad\hbox{a.e. in $\Omega$ if $|(t,\xi)| \ge R_0$,}
\end{equation}
where, without loss of generality, just taking $\eta_1$ large enough,
we can always have 
\[
\mu > \frac{1+\alpha_3}{\eta_1}.
\]
Thus, by means of \eqref{tt2t}, \eqref{tt} and \eqref{tt1},
direct calculations allow one to prove the existence of a constant $\eta_4 > 0$
so that
\begin{equation}\label{tt7}
A(x,t,\xi)\ \le\ \eta_4\ |t|^{\mu - \frac{1+\alpha_3}{\eta_1}}\ |\xi|^p
\quad\hbox{a.e. in $\Omega$, if $|t| \ge 1$ and $|\xi| \ge R_0$.}
\end{equation}
Whence, \eqref{tt5} and \eqref{tt7} imply
\begin{equation}\label{tt6}
a(x,t,\xi)\cdot\xi \le\ \frac{\eta_4\mu}{\alpha_2 + \alpha_3}\ |t|^{\mu - \frac{1+\alpha_3}{\eta_1}}\ |\xi|^p
\quad\hbox{a.e. in $\Omega$, if $|t| \ge 1$ and $|\xi| \ge R_0$.}
\end{equation}
At last, $(H_4)$ and \eqref{tt6} imply that
\begin{equation}\label{good1}
0 \le p s \le \mu - \frac{1+\alpha_3}{\eta_1}.
\end{equation}
We note that, if 
\begin{equation}\label{good2}
0 \le s < \frac{\mu}{p},
\end{equation}
then, without loss of generality, we can always choose $\eta_1$ in $(H_2)$ large enough so that 
\eqref{good1} holds. 
\end{remark}

\begin{remark}\label{come0}
In the model case $A(x,t,\xi) = \frac1p \bar A(x,t) |\xi|^p$ conditions $(H_2)$ and 
$(H_7)$ are trivially verified, so the set of assumptions reduce to the following one:
\begin{itemize}
\item[$(H_0)'$] $\ \bar A(x,t)\; $ is a $C^1$ Carath\'eodory function in $\Omega \times \R$;
\item[$(H_1)'$] two positive continuous functions $\Phi_i : \R \to \R$, $i \in \{1,2\}$, exist such that
\[
|\bar A_t(x,t)| \le \Phi_1(t), \quad |\bar A(x,t)| \le \Phi_2(t)\qquad 
\hbox{a.e. in $\Omega$, for all $t \in \R$;}
\]
\item[$(H_4)'$] $\ \bar A(x,t) \ge \alpha_1 (1 + |t|^{p s})\quad$ 
a.e. in $\Omega$, $\;$ for all $t \in \R$;
\item[$(H_5)'$] $\ \bar A(x,t) + \frac1p \bar A_t(x,t) t \ge \alpha_2 \bar A(x,t)\quad$
a.e. in $\Omega\;$ if $|t| \ge R_0$;
\item[$(H_6)'$] $\ \big(\frac{\mu}p - 1\big) \bar A(x,t) - \frac1p \bar A_t(x,t) t \ge \alpha_3 \bar A(x,t)
\quad$ a.e. in $\Omega\;$ if $|t| \ge R_0$.
\end{itemize}
In particular, if we consider $\bar A(x,t) = A_1(x) + A_2(x) |t|^{ps}$ as in \eqref{euler0},
the previous hypotheses hold if $A_1$, $A_2 \in L^{\infty}(\Omega)$ are such that \eqref{su0}
is satisfied and 
\begin{equation}\label{su00}
2 < 1 + p < p(s+1) < \mu. 
\end{equation}
\end{remark}

\begin{remark} 
Conditions $(G_0)$ and $(G_2)$ imply that a function $\eta \in L^\infty(\Omega)$,
$\eta(x) > 0$ a.e. in $\Omega$, and a constant $a_5 \ge 0$ exist such that
\begin{equation}
\label{basso3}
G(x,t)\ \ge\ \eta(x)\ |t|^\mu - a_5\qquad\hbox{a.e. in $\Omega$, for all $t \in \R$.}
\end{equation}
Hence, if also $(G_1)$ holds, from \eqref{alto3}, \eqref{good2} and \eqref{basso3} it follows
\[
p s < \mu \le q.
\]
\end{remark}

If the assumptions in this section hold with $s = 0$ in $(H_4)$ 
and $q < p^*$ in $(G_1)$, from \cite[Proposition 4.6]{CP2}
it follows that the functional $\J$ in \eqref{funct} satisfies the $(wCPS)$ condition in $\R$.
Here, in order to extend such a result to the case $s > 0$, and then considering  
$G(x,t)$ with a critical or supercritical growth,
we need the following application of the Rellich Embedding Theorem.

\begin{lemma}\label{rellich}
Taking $1 < p < N$ and $s \ge 0$, let $(u_n)_n \subset X$ be a sequence such that
\begin{equation}\label{c1}
\left( \int_\Omega (1+|u_n|^{p s})\ |\nabla u_n|^p dx\right)_n\quad \hbox{is bounded.} 
\end{equation}
Then, $u \in W^{1,p}_0(\Omega)$ exists such that 
$|u|^s u \in W^{1,p}_0(\Omega)$, too, and, up to subsequences, if $n\to+\infty$ we have
\begin{eqnarray}
&&u_n \rightharpoonup u\ \hbox{weakly in $W^{1,p}_0(\Omega)$,}
\label{c2}\\
&&|u_n|^s u_n \rightharpoonup |u|^s u\ \hbox{weakly in $W^{1,p}_0(\Omega)$,}
\label{c7}\\
&&u_n \to u\ \hbox{a.e. in $\Omega$,}
\label{c3}\\
&&u_n \to u\ \hbox{strongly in $L^r(\Omega)$ for each $r \in [1,p^*(s+1)[$.}
\label{c4}
\end{eqnarray}
\end{lemma}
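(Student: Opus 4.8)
The plan is to reduce everything to the Rellich theorem, applied not only to $(u_n)_n$ but also to the auxiliary sequence $w_n := |u_n|^s u_n$, and then to transfer convergence of $(w_n)_n$ back to $(u_n)_n$. Since $1+|u_n|^{ps}\ge 1$, hypothesis \eqref{c1} immediately gives that $(u_n)_n$ is bounded in $W^{1,p}_0(\Omega)$ and that $\big(\int_\Omega|u_n|^{ps}|\nabla u_n|^p\,dx\big)_n$ is bounded. Next I would note that $\Psi(t):=|t|^s t$ belongs to $C^1(\R)$, with $\Psi(0)=0$ and $\Psi'(t)=(s+1)|t|^s$; since each $u_n$ lies in $X=W^{1,p}_0(\Omega)\cap L^\infty(\Omega)$ and $\Psi$ is Lipschitz on the bounded range of $u_n$, the chain rule for Sobolev functions yields $w_n=\Psi(u_n)\in W^{1,p}_0(\Omega)$ with $\nabla w_n=(s+1)|u_n|^s\nabla u_n$. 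Therefore $|\nabla w_n|_p^p=(s+1)^p\int_\Omega|u_n|^{ps}|\nabla u_n|^p\,dx$ is bounded, so $(w_n)_n$ is bounded in $W^{1,p}_0(\Omega)$, and hence — by the Sobolev inequality — in $L^{p^*}(\Omega)$ as well.

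Using reflexivity of $W^{1,p}_0(\Omega)$ and the compact embedding $W^{1,p}_0(\Omega)\hookrightarrow\hookrightarrow L^r(\Omega)$ for $1\le r<p^*$, I would extract a single subsequence (not relabelled) along which $u_n\wk u$ and $w_n\wk w$ in $W^{1,p}_0(\Omega)$, with $u_n\to u$, $w_n\to w$ in $L^r(\Omega)$ for every $r<p^*$ and also a.e. in $\Omega$; this gives \eqref{c2} and \eqref{c3}, and $u\in W^{1,p}_0(\Omega)$ because that space is weakly closed. Since $u_n\to u$ a.e., continuity of $\Psi$ forces $w_n=\Psi(u_n)\to\Psi(u)=|u|^s u$ a.e., and comparison with $w_n\to w$ a.e. gives $w=|u|^s u$; thus $|u|^s u=w\in W^{1,p}_0(\Omega)$ and \eqref{c7} is exactly the relation $w_n\wk w$.

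It remains to enlarge the exponent range in the strong convergence, i.e. to prove \eqref{c4}. Fix $r\in[1,p^*(s+1)[$ and put $\theta:=\frac1{s+1}\in(0,1]$, so that $r':=\frac{r}{s+1}<p^*$. The pointwise inverse of $\Psi$ is $\Phi(\tau):=|\tau|^\theta\operatorname{sgn}(\tau)$, and $u_n=\Phi(w_n)$, $u=\Phi(w)$ a.e.; from the elementary inequality $\big||\tau_1|^\theta\operatorname{sgn}\tau_1-|\tau_2|^\theta\operatorname{sgn}\tau_2\big|\le 2|\tau_1-\tau_2|^\theta$, valid for $0<\theta\le1$, I get $|u_n-u|^r\le 2^r|w_n-w|^{r'}$ pointwise, hence $|u_n-u|_r^r\le 2^r|w_n-w|_{r'}^{r'}\to0$ since $r'<p^*$. (Equivalently, one may use that $(u_n)_n$ is bounded in $L^{p^*(s+1)}(\Omega)$, that $|\Omega|<+\infty$, and that $u_n\to u$ a.e., so Vitali's theorem gives $u_n\to u$ in $L^r(\Omega)$ for each $r<p^*(s+1)$.)

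The step I expect to demand the most care is verifying that $w_n=|u_n|^s u_n\in W^{1,p}_0(\Omega)$ with the asserted gradient — which is precisely where the $L^\infty$ bound built into the norm of $X$ is indispensable, as $\Psi$ is only locally Lipschitz — together with the bookkeeping needed to select one common subsequence along which the a.e. limits of $(u_n)_n$ and of $(w_n)_n$ are matched up as $u$ and $|u|^s u$.
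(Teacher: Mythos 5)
Your proposal is correct and follows essentially the same route as the paper: the identity $\nabla(|u_n|^s u_n)=(s+1)|u_n|^s\nabla u_n$ gives boundedness of both $(u_n)_n$ and $(|u_n|^s u_n)_n$ in $W^{1,p}_0(\Omega)$, and the two weak limits are identified through a.e. convergence along a common subsequence. The only (immaterial) difference is in extending \eqref{c4} to $p^*\le r<p^*(s+1)$: the paper interpolates between $L^{r_0}(\Omega)$, $r_0<p^*$, and $L^{p^*(s+1)}(\Omega)$ using the boundedness of $(u_n)_n$ in the latter space, whereas you transfer strong convergence of $|u_n|^su_n$ back through the $\frac{1}{s+1}$-H\"older-continuous inverse map (or invoke Vitali), which works equally well.
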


\begin{proof}
Firstly, we note that
\begin{equation}\label{c8}
|\nabla (|u|^{s}u)|^p \ = \
(s+1)^p\ |u|^{ps}\ |\nabla u|^p \quad \hbox{a.e. in $\Omega\;$ for all $u \in X$,}
\end{equation}
then from \eqref{c1} the sequences $(u_n)_n$ and $(|u_n|^{s}u_n)_n$ 
are bounded in $W^{1,p}_0(\Omega)$; hence,
$u$, $v \in W^{1,p}_0(\Omega)$ exist such that,
up to subsequences, we have \eqref{c2}, \eqref{c3},\eqref{c4} with $r<p^*$,
and also
$|u_n|^s u_n \rightharpoonup v\ $ weakly in $W^{1,p}_0(\Omega)$ and 
$|u_n|^{s}u_n \to v\ $ a.e. in $\Omega$. Thus, $v = |u|^s u$
and \eqref{c7} holds.\\
At last, if $s > 0$, \eqref{c4} holds also if $p^* \le r < p^*(s+1)$
from interpolation as $u \in L^{p^*(s+1)}(\Omega)$
and $(u_n)_n$ is bounded in $L^{p^*(s+1)}(\Omega)$.
\end{proof}

Now, we recall a particular version of \cite[Theorem II.5.1]{LU} which 
we will use for proving the boundedness 
of the weak limit of a $(CPS)$--sequence (see \cite[Lemma 4.5]{CP2}).

\begin{lemma}
\label{tecnico} Let $p$, $r$ be so that $1 < p \le r < p^*$, $p < N$
and take $v \in W^{1,p}_0(\Omega)$. Assume that $\bar{a} >0$ and $k_0\in \N$
exist such that the inequality
\[
\int_{\Omega^+_k}|\nabla v|^p dx \le \bar{a}\left(|\Omega^+_k| +
\int_{\Omega^+_k} v^r dx\right)
\]
holds for all $k \ge k_0$, with $\Omega^+_k = \{x \in \Omega: v(x) > k\}$.
Then, $\displaystyle \esssup_{\Omega} v$
is bounded from above by a positive constant which can be chosen so
that it depends only on $|\Omega|$, $N$, $p$, $r$, $\bar{a}$, $k_0$, $|v|_{p^*}$. 
\end{lemma}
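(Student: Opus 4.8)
The plan is to prove this by the classical De Giorgi truncation--iteration scheme, the assumed inequality playing the role of a Caccioppoli estimate. For $k\ge k_0$ put $v_k:=(v-k)^+\in W^{1,p}_0(\Omega)$, so that $\nabla v_k=\nabla v\,\chi_{\Omega^+_k}$ and $v_k=v-k$ on $\Omega^+_k$, and introduce
\[
\phi(k):=\int_{\Omega^+_k}(v-k)^{p^*}\,dx\ =\ |v_k|_{p^*}^{p^*},\qquad k\ge k_0,
\]
which is nonincreasing in $k$ and satisfies $\phi(k_0)\le|v|_{p^*}^{p^*}$ (since $0<v-k_0<v$ on $\Omega^+_{k_0}$). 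The two main steps are: (a) to turn the hypothesis into a self--improving recursion for $\phi$, and (b) to run the standard geometric iteration on it.

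For step (a), fix $h>k\ge k_0$. Since $v_k\ge h-k$ on $\Omega^+_h$ one has $|\Omega^+_h|\le(h-k)^{-p^*}\phi(k)$, and on $\Omega^+_h$ the two elementary pointwise bounds
\[
v\ \le\ \tfrac{h}{h-k}\,(v-k)\qquad\hbox{and}\qquad v^{\,r}\ =\ v^{p^*}\,v^{\,r-p^*}\ \le\ h^{\,r-p^*}\,v^{p^*}
\]
hold, the first because $\tfrac{v}{v-k}=1+\tfrac{k}{v-k}\le\tfrac{h}{h-k}$, the second because $v>h\ge k_0\ge1$ and $r<p^*$. Combining them gives $\int_{\Omega^+_h}v^r\,dx\le h^r(h-k)^{-p^*}\phi(k)$, hence from the hypothesis on $v$ and $h\ge1$,
\[
\int_{\Omega^+_h}|\nabla v|^p\,dx\ \le\ \bar a\Big(|\Omega^+_h|+\int_{\Omega^+_h}v^r\,dx\Big)\ \le\ \frac{2\bar a\,h^r}{(h-k)^{p^*}}\,\phi(k).
\]
Applying the Sobolev inequality $|v_h|_{p^*}\le\mathcal S_{N,p}\,|\nabla v_h|_p$, raising it to the power $p^*$ and using $|\nabla v_h|_p^p=\int_{\Omega^+_h}|\nabla v|^p\,dx$ yields
\[
\phi(h)\ \le\ C\,\frac{h^{\,rp^*/p}}{(h-k)^{(p^*)^2/p}}\,\phi(k)^{p^*/p},\qquad h>k\ge k_0,
\]
with $C=C(N,p,r,\bar a)$; the decisive point is that the exponent $p^*/p$ is strictly larger than $1$.

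For step (b), take a final level $M\ge 2k_0$ to be fixed and the levels $k_n:=M-(M-k_0)2^{-n}$ (so $k_n=k_0$ at $n=0$, $k_n\nearrow M$, $k_n-k_{n-1}=(M-k_0)2^{-n}$). The inequality above becomes $\phi(k_n)\le\widetilde C\,b^{\,n}\,\phi(k_{n-1})^{p^*/p}$ with $b=2^{(p^*)^2/p}>1$ and, using $M-k_0\ge M/2$, $\widetilde C\le C'\,M^{(p^*/p)(r-p^*)}$ with $C'=C'(N,p,r,\bar a)$. Because $r<p^*$ the exponent of $M$ is negative, so $\widetilde C$ can be made arbitrarily small by choosing $M$ large; the size of $M$ required for $\phi(k_0)$ to fall below the threshold of the classical fast--geometric--convergence lemma (see, e.g., \cite[Chapter II]{LU}) depends only on $|\Omega|,N,p,r,\bar a,k_0$ and on $|v|_{p^*}$ (the latter through $\phi(k_0)\le|v|_{p^*}^{p^*}$). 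For such an $M$ the lemma gives $\phi(k_n)\to0$; since $\phi$ is nonincreasing and $k_n\nearrow M$, it follows that $\phi(k)=0$ for every $k>M$, whence $|\Omega^+_k|=0$ for every $k>M$, i.e. $v\le M$ a.e. in $\Omega$, with $M$ depending only on the quantities listed in the statement.

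The main obstacle is step (a): obtaining a recursion with an exponent strictly larger than $1$. The right--hand side of the assumed inequality carries $\int_{\Omega^+_k}v^r$ with the function $v$ itself rather than the truncation $v-k$, so one must exploit $r<p^*$ (for the integrability gain) together with $k_0\ge 1$ (to pass from $v^r$ to $h^{\,r-p^*}v^{p^*}$ and from $1+h^r$ to $2h^r$ on the super--level sets) before Hölder and the endpoint Sobolev inequality can deliver the factor $\phi(k)^{p^*/p}$. Everything after that recursion is the routine De Giorgi machinery, together with the bookkeeping needed to make the final constant $M$ depend only on the admissible data.
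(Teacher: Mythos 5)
Your proof is correct: the recursion in step (a) is derived soundly (the pointwise bounds $v\le\frac{h}{h-k}(v-k)$ and $v^{r-p^*}\le h^{r-p^*}$ on $\Omega^+_h$ are exactly what is needed to absorb the untruncated $v^r$ into $\phi(k)$), and the iteration in step (b), with $M$ chosen large so that $\widetilde C\,\phi(k_0)^{\alpha}$ falls below the geometric-convergence threshold, yields a bound depending only on the admitted data. The paper itself gives no proof of this lemma — it merely cites Theorem II.5.1 of Ladyzhenskaya--Ural'tseva and \cite[Lemma 4.5]{CP2} — and your argument is a correct, self-contained rendition of precisely the De Giorgi truncation--iteration scheme on which that cited result rests.
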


Now, we are ready to prove that $\J$ satisfies the weak Cerami--Palais--Smale 
condition in $X$. If $1<p<N$, this new result extends \cite[Proposition 4.6]{CP2} 
where the exponent $q$ in $(G_1)$ is subcritical, i.e., $q < p^*$.
On the contrary, here we assume the weaker condition 
\begin{equation}\label{cps0}
q < p^*(s+1). 
\end{equation}
Hence, without loss of generality, we can always assume $q$ large enough
such that
\begin{equation}\label{ccps}
p (s+1) \ <\ q\ < \ p^*(s+1).
\end{equation}

\begin{proposition}\label{wcps}
Assume that hypotheses $(H_0)$--$(H_7)$, $(G_0)$--$(G_2)$ and \eqref{cps0} hold with $1<p<N$.
Then, the functional $\J$ satisfies the $(wCPS)$ condition in $\R$.
\end{proposition}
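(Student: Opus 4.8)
The plan is to show that every $(CPS)_\beta$--sequence $(u_n)_n$ in $X$ admits a subsequence converging in $W=W^{1,p}_0(\Omega)$ to a critical point $u$ of $\J$ with $\J(u)=\beta$, thereby verifying both requirements of Definition \ref{wCPS}. First I would extract boundedness information: combining $\J(u_n)\to\beta$ with $\langle d\J(u_n),u_n\rangle = o(1)$ (which holds because $\|d\J(u_n)\|_{X'}(1+\|u_n\|_X)\to0$ and $u_n\in X$), and using the Ambrosetti--Rabinowitz--type inequalities $(H_6)$ and $(G_2)$ together with $(H_3)$ to absorb the contribution of the region $\{|(u_n,\nabla u_n)|\le R_0\}$, one obtains that $\mu\J(u_n)-\langle d\J(u_n),u_n\rangle \ge \alpha_3\int_\Omega a(x,u_n,\nabla u_n)\cdot\nabla u_n\,dx - C$. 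Invoking $(H_4)$ this gives a uniform bound on $\int_\Omega(1+|u_n|^{ps})|\nabla u_n|^p\,dx$, hence in particular $(u_n)_n$ is bounded in $W$. Then Lemma \ref{rellich} applies: up to a subsequence, $u_n\wk u$ in $W$, $|u_n|^su_n\wk|u|^su$ in $W$, $u_n\to u$ a.e., and $u_n\to u$ strongly in $L^r(\Omega)$ for every $r<p^*(s+1)$; in particular, by \eqref{ccps}, $u_n\to u$ in $L^q(\Omega)$, which controls the lower-order term $\int g(x,u_n)(u_n-u)\,dx\to0$ via $(G_1)$.

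Next I would upgrade the weak convergence in $W$ to strong convergence, which is the standard ``$S_+$-type'' argument. Testing $d\J(u_n)$ with $u_n-u$ gives $\langle d\J(u_n),u_n-u\rangle = o(1)$ since $u_n-u$ is bounded in $X$? — here lies a subtlety, because $u_n-u$ need not be bounded in $L^\infty$; however $\|d\J(u_n)\|_{X'}\to0$ and one splits $\langle d\J(u_n),u_n-u\rangle$ into a $W^{1,p}_0$-pairing plus controlled remainders, using that $\|d\J(u_n)\|_{W'}$ can be estimated from $\|d\J(u_n)\|_{X'}$ together with the structure of $d\J$ coming from $(H_1)$ and the already-established $L^\infty$-independent bounds. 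After subtracting the term $\int_\Omega a(x,u_n,\nabla u)\cdot\nabla(u_n-u)\,dx$, which tends to $0$ because $a(x,u_n,\nabla u)\to a(x,u,\nabla u)$ in $L^{p'}$ by $(H_1)$, $(H_3)$ and dominated convergence while $\nabla(u_n-u)\wk0$, one is left with
\[
\int_\Omega [a(x,u_n,\nabla u_n)-a(x,u_n,\nabla u)]\cdot\nabla(u_n-u)\,dx \;\longrightarrow\;0 .
\]
The strict monotonicity hypothesis $(H_7)$ then forces $\nabla u_n\to\nabla u$ in measure, and a standard argument (Fatou plus the uniform bound from \eqref{c1}) promotes this to $\nabla u_n\to\nabla u$ in $L^p(\Omega)$, i.e. $\|u_n-u\|_W\to0$, giving part $(i)$ of Definition \ref{wCPS}.

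Finally I would identify the limit as a critical point at level $\beta$. Once $u_n\to u$ in $W$ and $u_n\to u$ a.e. with the $L^\infty$ bound no longer needed (one works directly with $u\in X$: boundedness of $u$ follows from Lemma \ref{tecnico} applied to $v=u^+$ and $v=-u^-$, where the Caccioppoli-type inequality on $\Omega^+_k$ is obtained by testing the limit equation with $(u-k)^+$ and exploiting $(H_4)$, $(H_2)$, $(G_1)$ and \eqref{ccps} to bound the right-hand side by $\bar a(|\Omega^+_k|+\int_{\Omega^+_k}u^r\,dx)$ for suitable $r<p^*$), one passes to the limit in \eqref{diff}: the principal part converges because $\nabla u_n\to\nabla u$ in $L^p$ and the Carath\'eodory functions $a$, $A_t$ are continuous, with the growth bounds $(H_1)$ supplying equi-integrability, while $\int g(x,u_n)v\,dx\to\int g(x,u)v\,dx$ for each fixed $v\in X$ by $(G_1)$ and $L^q$-convergence. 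Hence $d\J(u)=0$. Strong convergence in $W$ plus the $L^\infty$ bound also yield $\J(u_n)\to\J(u)$ by Proposition \ref{smooth1}, so $\J(u)=\beta$, establishing $(ii)$. The main obstacle is the second paragraph: making the test-function argument rigorous despite $X$ being a strictly smaller space than $W$, i.e. controlling $\langle d\J(u_n),u_n-u\rangle$ without an $L^\infty$ bound on $u_n-u$ — this is exactly where the interplay between the two norms, and the $(1+\|u_n\|_X)$ weight in the $(CPS)$ condition, must be used carefully, following the scheme of \cite[Lemma 4.5 and Proposition 4.6]{CP2}.
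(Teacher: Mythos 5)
Your Step 1 (the Ambrosetti--Rabinowitz estimate via $(H_3)$, $(H_6)$, $(G_2)$ and $(H_4)$ yielding \eqref{c1}, then Lemma \ref{rellich}) is exactly the paper's Step 1. After that, however, the proposal has a genuine gap in its central argument. You test $d\J(u_n)$ with $u_n-u$ and claim the pairing can be controlled by splitting off a $W^{1,p}_0$-part and ``$L^\infty$-independent bounds''. No such bounds exist under the stated hypotheses: by $(H_1)$ the terms $a(x,u_n,\nabla u_n)$ and $A_t(x,u_n,\nabla u_n)$ carry the coefficients $\Phi_i(u_n)$, $\phi_i(u_n)$, which are only continuous in $t$ and may blow up along a $(CPS)$-sequence that is unbounded in $L^\infty(\Omega)$ (such sequences are exactly why the weak condition is introduced). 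Hence $d\J(u_n)$ does not extend to a bounded functional on $W$, and $\langle d\J(u_n),u_n-u\rangle\to 0$ cannot be extracted from $\|d\J(u_n)\|_{X'}(1+\|u_n\|_X)\to 0$ when $u_n-u\notin$ a bounded set of $L^\infty(\Omega)$. The paper resolves this structurally: it first proves $u\in L^\infty(\Omega)$ (its Step 2), then replaces $u_n$ by the truncations $T_ku_n$ with $k\ge\max\{|u|_\infty,R_0\}+1$, shows that $\J(T_ku_n)\to\beta$ and $\|d\J(T_ku_n)\|_{X'}\to 0$ (Step 3), and only then runs the $(H_7)$-monotonicity argument on the uniformly $L^\infty$-bounded sequence $(T_ku_n)_n$ to get $\|T_ku_n-u\|_W\to 0$, hence $\|u_n-u\|_W\to 0$. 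Your sketch has no truncation step, and in fact reverses the order, putting the $L^\infty$-bound on $u$ after the strong convergence, which also makes your derivation of that bound circular (you propose to test ``the limit equation'', but $d\J(u)=0$ is only available at the very end).

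The second gap is in the $L^\infty$-bound itself, which is precisely where the supercritical growth must be handled. You apply Lemma \ref{tecnico} to $v=u^+$ with ``suitable $r<p^*$'', but when $p^*\le q<p^*(s+1)$ the term $\int_{\Omega^+_k}u^q\,dx$ cannot be dominated by $|\Omega^+_k|+\int_{\Omega^+_k}u^r\,dx$ with $r<p^*$, so Lemma \ref{tecnico} does not apply to $u$ directly. The paper instead tests $d\J(u_n)$ with $R^+_ku_n$, uses $(H_5)$, $(H_4)$ and \eqref{c8} to produce $\int_{\Omega^+_{n,k}}|\nabla(u_n^{s+1})|^p\,dx$ on the left-hand side, passes to the lower limit via the weak convergence \eqref{c7}, and then applies Lemma \ref{tecnico} to $v=|u|^su$ with exponent $q/(s+1)<p^*$, which is admissible by \eqref{ccps}. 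This substitution $v=u^{s+1}$, made possible by the weight $|t|^{ps}$ in $(H_4)$, is the essential new ingredient of the proposition relative to \cite[Proposition 4.6]{CP2}, and it is missing from your proposal.
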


\begin{proof}
Let $\beta \in \R$ be fixed and consider a $(CPS)_\beta$--sequence $(u_n)_n \subset X$,
i.e., 
\begin{equation}\label{cps1}
\J(u_n) \to \beta \quad \hbox{and}\quad \|d\J(u_n)\|_{X'}(1 + \|u_n\|_X) \to 0.
\end{equation}
We divide our proof in the following  steps:
\begin{itemize}
\item[1.] $(u_n)_n$ is bounded in $W^{1,p}_0(\Omega)$, or more precisely \eqref{c1} holds; 
thus from Lemma \ref{rellich} a function $u \in W^{1,p}_0(\Omega)$ 
exists such that $|u|^s u \in W^{1,p}_0(\Omega)$
and \eqref{c2}--\eqref{c4} hold, up to subsequences;
\item[2.] $u \in L^\infty(\Omega)$;
\item[3.] 
if $k \ge \max\{|u|_\infty, R_0\} + 1$ ($R_0 \ge 1$ as in the set of hypotheses)
then
\[ 
\J(T_ku_n) \to \beta \quad \hbox{and}\quad 
\|d\J(T_ku_n)\|_{X'} \to 0,
\]
where $T_k : \R \to \R$ is the truncation function defined as
\[
T_kt = \left\{\begin{array}{ll}
t&\hbox{if $|t| \le k$}\\
k\frac t{|t|}&\hbox{if $|t| > k$}
\end{array}\right. ;
\]
\item[4.] $\|T_k u_n - u\|_W \to 0$ if $n\to+\infty$, then
$\|u_n - u\|_W \to 0$ if $n\to+\infty$, too;
\item[5.] $\J(u) = \beta$ and $d\J(u) = 0$.
\end{itemize}
For simplicity, here and in the following we will use the notation $(\eps_n)_n$
for any infinitesimal sequence depending only on $(u_n)_n$ 
while $d_i$ will denote any strictly positive constant independent of $n$.\\
{\sl Step 1.} From \eqref{funct}, \eqref{diff}, \eqref{cps1}, together with
$(H_1)$, $(H_3)$, $(H_6)$, \eqref{alto3}, $(G_1)$, $(G_2)$, by
reasoning as in the proof of Step 1 in \cite[Proposition 4.6]{CP2}
and using hypothesis $(H_4)$ we have that
\[
\begin{split}
\mu \beta + \eps_n\ &=\ \mu \J(u_n) - \langle d\J(u_n),u_n\rangle\
\ge\ \alpha_3 \int_\Omega a(x,u_n,\nabla u_n)\cdot \nabla u_n dx -d_1\\
&\ge\ \alpha_1 \alpha_3 \int_\Omega (1+|u_n|^{ps})\ |\nabla u_n|^p dx - d_1
\end{split}
\] 
which implies \eqref{c1}.\\
{\sl Step 2.} Arguing by contradiction, let us assume that 
\begin{equation}
\label{ess1}
\esssup_\Omega u = +\infty;
\end{equation}
thus, taking any $k \in\N$, $k> R_0$ ($R_0 \ge 1$ as in the hypotheses), we have that
\begin{equation}\label{asp}
|\Omega^+_k| > 0 \quad\hbox{with}\quad \Omega^+_k = \{x \in \Omega: u(x) > k\}.
\end{equation}
Now, for any $\tilde{k} > 0$ consider the new function
$R^+_{\tilde{k}} : t \in\R \to R^+_{\tilde{k}}t \in \R$ such that
\[
R^+_{\tilde{k}}t = \left\{\begin{array}{ll}
0&\hbox{if $t \le \tilde{k}$}\\
t-{\tilde{k}}&\hbox{if $t > \tilde{k}$}
\end{array}\right. .
\]
Taking $\tilde{k} = k^{s+1}$, from \eqref{c7} it follows that 
\[
R_{k^{s+1}}^+(|u_n|^{s}u_n) \rightharpoonup R_{k^{s+1}}^+(|u|^{s}u)\quad
\hbox{weakly in $W^{1,p}_0(\Omega)$;}
\]
then, the weak lower semicontinuity of $\|\cdot\|_{W}$
implies
\[
\int_{\Omega} |\nabla R_{k^{s+1}}^+(|u|^{s}u)|^p dx \ \le\
\liminf_{n\to+\infty}\int_{\Omega} |\nabla R_{k^{s+1}}^+(|u_n|^{s}u_n)|^p dx ,
\]
i.e.,
\begin{equation}\label{b1}
\int_{\Omega^+_k} |\nabla (u^{s+1})|^p dx \le \liminf_{n\to+\infty}\int_{\Omega^+_{n,k}} |\nabla (u_n^{s+1})|^p dx
\end{equation}
as $\; |t|^s t > k^{s+1}\; \iff \; t > k$,
with $\Omega^+_{n,k} = \{x \in \Omega: u_n(x) > k\}$.\\
On the other hand, from $\|R^+_ku_n\|_X \le \|u_n\|_X$, \eqref{cps1} and
\eqref{asp} it follows that $n_{k}\in \N$ exists so that
\begin{equation}\label{b2}
|\langle d\J(u_n),R^+_ku_n\rangle|\ <\ |\Omega^+_k| \qquad \hbox{for all $n \ge n_{k}$.}
\end{equation}
From \eqref{diff}, $(H_5)$ with $\alpha_2 \le 1$ (see Remark \ref{per1}),
$(H_4)$, \eqref{c8}, we have that
\[\begin{split}
\langle d\J(u_n),R^+_ku_n\rangle &=
\int_{\Omega^+_{n,k}} (1 - \frac k{u_n}) \left(a(x,u_n,\nabla u_n)\cdot \nabla u_n 
+ A_t(x,u_n,\nabla u_n)u_n\right) dx \\
&\qquad\; + \int_{\Omega^+_{n,k}} \frac{k}{u_n}\ a(x,u_n,\nabla u_n) \cdot\nabla u_n dx -
\int_{\Omega} g(x,u_n)R^+_ku_n dx \\
& \ge\
\alpha_2\ \int_{\Omega^+_{n,k}} a(x,u_n,\nabla u_n) \cdot \nabla u_n dx -
\int_{\Omega} g(x,u_n)R^+_ku_n dx \\
& \ge\ \alpha_1 \alpha_2 \int_{\Omega^+_{n,k}} u_n^{ps}\ |\nabla u_n|^p dx
-\int_{\Omega} g(x,u_n)R^+_ku_n dx \\
&=\ \frac{\alpha_1 \alpha_2}{(s+1)^p} \int_{\Omega^+_{n,k}} |\nabla (u_n^{s+1})|^p dx
-\int_{\Omega} g(x,u_n)R^+_ku_n dx .
\end{split}
\]
Thus, from \eqref{b2} it follows that
\[
\int_{\Omega^+_{n,k}} |\nabla (u_n^{s+1})|^p dx\ \le\ \frac{(s+1)^p}{\alpha_1 \alpha_2}\
\left(|\Omega^+_k| + \int_{\Omega} g(x,u_n)R^+_ku_n dx\right).
\]
Now, from $(G_1)$, \eqref{c4} and \eqref{cps0} it results 
\[
\int_{\Omega} g(x,u_n)R^+_ku_n dx\ \to\ \int_{\Omega} g(x,u)R^+_ku\ dx;
\]
hence, by passing to the lower limit, \eqref{b1} implies 
\[
\int_{\Omega^+_k} |\nabla (u^{s+1})|^p dx \le \frac{(s+1)^p}{\alpha_1 \alpha_2}\
\left(|\Omega^+_k| + \int_{\Omega} g(x,u)R^+_ku\ dx \right).
\]
Therefore, as in $\Omega^+_k$ it is $u > 1$, from $(G_1)$ and direct computations
it follows that
\begin{equation}\label{b3}
\int_{\Omega^+_k} |\nabla (u^{s+1})|^p dx\ \le\ d_2\
\left(|\Omega^+_k| + \int_{\Omega^+_k} u^q\ dx \right).
\end{equation}
At last, if we set $v = |u|^{s}u$, as $v \in W^{1,p}_0(\Omega)$ and
$\Omega^+_k = \{x \in \Omega: v(x) > k^{s+1}\}$ (in particular, $v = u^{s+1}$ in $\Omega^+_k$),
from \eqref{b3} we obtain
\[
\int_{\Omega^+_k} |\nabla v|^p dx\ \le\ d_2\
\left(|\Omega^+_k| + \int_{\Omega^+_k} v^{\frac{q}{s+1}}\ dx \right).
\]
Then, from \eqref{ccps} Lemma \ref{tecnico} applies and $\displaystyle \esssup_\Omega v < +\infty$ in contradiction to
\eqref{ess1}. 
Similar arguments apply if $\displaystyle\esssup_\Omega (-u) = +\infty$. Hence, $u \in L^\infty(\Omega)$.\\
{\sl Step 3}. The proof can be obtained reasoning as in the proof of Step 3 in
\cite[Proposition 4.6]{CP2} but using \eqref{c4} and \eqref{cps0} instead of \cite[(4.15)]{CP2}.\\
{\sl Steps 4}, {\sl 5}. The proofs are as in the corresponding steps of the proof of
\cite[Proposition 4.6]{CP2}.  
\end{proof}

At last, in order to prove a multiplicity result, 
we introduce a suitable decomposition of $X$. 

If $p=2$, we deal with the Hilbert space $H^1_0(\Omega)$ 
so the classical choice is to consider the sequence 
of the eigenvalues of $-\Delta$ on $\Omega$, 
with homogeneous Dirichlet data, and their (bounded) eigenfunctions, 
so that, for each $n\ge 1$, the Banach space $X$ 
can be decomposed into the
closed subspace spanned by the first $n$ of such eigenfunctions and
the corresponding complement (for the model problem in this case, see
\cite{CP1}).

More in general, if $p > 1$ and $p \ne 2$,  
$W^{1,p}_0(\Omega)$ is just a reflexive Banach space and a ``canonical'' 
decomposition is not known. Anyway, as in \cite[Section 5]{CP2},
a sequence of positive numbers $(\lambda_j)_j$ exists such that
\begin{itemize}
\item
$ 0 < \lambda_1 < \lambda_2 \le \dots \le \lambda_j\le \dots \quad$ and
$\quad \lambda_j \nearrow +\infty \quad $ as $j \to +\infty$;
\item
for each $j \in \N$
a function $\varphi_j\in W^{1,p}_0(\Omega)$ exists such that
$|\varphi_{j}|_p = 1$, $\|\varphi_{j}\|_W = \lambda_{j}$
and $\varphi_i \ne \varphi_j$ if $i \ne j$;
\item $\lambda_1 > 0$ is the first eigenvalue of $-\Delta_p$ 
in $W^{1,p}_0(\Omega)$ such that
\begin{equation}\label{el1}
\lambda_{1}\ \int_{\Omega} |w|^p dx \le\ \int_{\Omega} |\nabla w|^p dx\qquad
\hbox{for all $w\in W^{1,p}_0(\Omega)$}
\end{equation}
and $\varphi_1 \in W^{1,p}_0(\Omega)$ 
is the unique corresponding eigenfunction  
such that $\varphi_1 > 0$, $|\varphi_{1}|_p = 1$ and $\|\varphi_1\|_W = \lambda_1$
(see, e.g., \cite{Lin});
\item $ \varphi_j \in L^{\infty}(\Omega)$ for each $j \in \N$;
\item the sequence $(\varphi_j)_j$ generates the whole 
space $W^{1,p}_0(\Omega)$.
\end{itemize}

Moreover, fixing any $n \in \N$ and defining 
\[
V_n = {\rm span}\{\varphi_1,\dots,\varphi_n\} = \{v \in W^{1,p}_0(\Omega):
\; \exists\ \beta_1,\dots,\beta_n \in \R\ \hbox{s.t.}\
v = \sum_{i=1}^n \beta_i\varphi_i\},
\]
a closed subspace $W_n$ exists such that  
\[
W^{1,p}_0(\Omega)\ =\ V_n + W_n, \qquad  V_n \cap W_n\ = \ \{0\},
\]
and 
\begin{equation}\label{el}
\lambda_{n+1}\ \int_{\Omega} |w|^p dx \le\ \int_{\Omega} |\nabla w|^p dx\qquad
\hbox{for all $w\in W_n$.}
\end{equation}

Then, $V_n$ is a closed subspace of $X$, too, and
 we have that
\begin{equation}\label{decompongo11}
X\ =\ V_n + W_n^X\quad \hbox{and}\quad V_n \cap W_n^X = \{0\},\quad
 \hbox{with $W_n^X = W_n \cap L^\infty(\Omega)$,}
\end{equation}
whence,
\begin{equation}\label{decompongo2}
{\rm codim} W_n^X \ =\ {\rm dim} V_n \ =\ n.
\end{equation}

\section{Existence and multiplicity results}
\label{sec5}

Finally, we can state our main theorems.

\begin{theorem}\label{mainesisto}
Assume that $(H_0)$--$(H_7)$, $(G_0)$--$(G_2)$ 
and \eqref{cps0} hold.
If, furthermore, $\alpha_4 > 0$ exists
such that 
\begin{itemize}
\item[$(H_8)$] $\quad A(x,t,\xi) \ge \alpha_4 (1+|t|^{ps}) |\xi|^p\quad$ a.e. in $\Omega$, 
for all $(t,\xi) \in \R\times \R^N$; 
\item[$(G_3)$] $\quad \displaystyle\limsup_{t\to 0}\frac{g(x,t)}{|t|^{p-2}t}\ 
<\ p \alpha_4 \lambda_1\;$ uniformly with respect to a.e. $x \in \Omega$,
where $\lambda_1$ is the first eigenvalue of $-\Delta_p$ in $W^{1,p}_0(\Omega)$, 
\end{itemize}
then the functional $\J$ defined in \eqref{funct} possesses at least one nontrivial critical point,
i.e., problem \eqref{euler} admits at least a weak bounded nontrivial solution.
\end{theorem}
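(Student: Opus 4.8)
The plan is to apply the generalized Mountain Pass Theorem \ref{mountainpass} to $\J$ on $X=W^{1,p}_0(\Omega)\cap L^\infty(\Omega)$, with $W=W^{1,p}_0(\Omega)$. Three of its hypotheses are already available: since $(H_1)$--$(H_6)$ hold, Remark \ref{smooth} gives the growth bound \eqref{uno}, so Proposition \ref{smooth1} yields $\J\in C^1(X,\R)$; since $(H_0)$--$(H_7)$, $(G_0)$--$(G_2)$ and \eqref{cps0} hold, Proposition \ref{wcps} gives the $(wCPS)$ condition in $\R$, in particular in $\R_+$; and $\J(0)=0$ (recall $G(x,0)=0$ and $A(x,0,0)=0$, as in the model case). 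It then remains to exhibit the map $\ell$, the constants $r_0,\varrho_0>0$ and the point $e$ of Theorem \ref{mountainpass}.

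For the ring around the origin one cannot take $\ell=\|\cdot\|_W$: a $\|\cdot\|_W$--sphere is unbounded in $L^\infty(\Omega)$, and since $G$ may be supercritical $\J$ need not stay positive on it. I would instead set
\[
\ell(u)\ =\ \|u\|_W+\|\,|u|^s u\,\|_W\ =\ |\nabla u|_p+(s+1)\Big(\int_\Omega|u|^{ps}|\nabla u|^p\,dx\Big)^{1/p},
\]
the last equality being \eqref{c8}. This $\ell$ is continuous on $X$ (if $u_n\to u$ in $X$ then $|u_n|^{ps}\to|u|^{ps}$ in $L^\infty(\Omega)$ and $|\nabla u_n|^p\to|\nabla u|^p$ in $L^1(\Omega)$), satisfies $\ell(0)=0$ and $\ell(u)\ge\|u\|_W$, which is $(i)$. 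For $(ii)$: by $(H_8)$ and \eqref{c8},
\[
\int_\Omega A(x,u,\nabla u)\,dx\ \ge\ \alpha_4\|u\|_W^p+\frac{\alpha_4}{(s+1)^p}\,\|\,|u|^s u\,\|_W^p;
\]
by $(G_3)$ there are $\eps,\delta>0$ with $g(x,t)t\le(p\alpha_4\lambda_1-\eps)|t|^p$ for $|t|\le\delta$, hence (absorbing $|t|>\delta$ by means of $(G_1)$) $G(x,t)\le\frac{p\alpha_4\lambda_1-\eps}{p}|t|^p+c_1|t|^q$ for all $t$, so that \eqref{el1} together with the Sobolev embedding of $|u|^s u\in W^{1,p}_0(\Omega)$ at the exponent $q/(s+1)<p^*$ (legitimate by \eqref{cps0}) gives
\[
\int_\Omega G(x,u)\,dx\ \le\ \Big(\alpha_4-\frac{\eps}{p\lambda_1}\Big)\|u\|_W^p+c_2\,\|\,|u|^s u\,\|_W^{q/(s+1)}.
\]
Therefore $\J(u)\ge\frac{\eps}{p\lambda_1}\|u\|_W^p+\frac{\alpha_4}{(s+1)^p}\|\,|u|^s u\,\|_W^p-c_2\|\,|u|^s u\,\|_W^{q/(s+1)}$, and on $\{\ell(u)=r_0\}$ both $\|u\|_W$ and $\|\,|u|^s u\,\|_W$ are $\le r_0$ with sum $r_0$; since $q/(s+1)>p$ by \eqref{ccps}, minimizing $x\mapsto \frac{\eps}{p\lambda_1}x^p+\frac{\alpha_4}{(s+1)^p}(r_0-x)^p-c_2(r_0-x)^{q/(s+1)}$ over $[0,r_0]$ shows that for every small $r_0>0$ the right--hand side is bounded below by a constant $\varrho_0>0$, which is $(ii)$.

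For $(iii)$ I would take $e=t_0\varphi_1$, with $\varphi_1\in W^{1,p}_0(\Omega)\cap L^\infty(\Omega)$ the first eigenfunction (so $e\in X$) and $t_0$ large, so that $\|e\|_W=t_0\lambda_1>r_0$. By \eqref{basso3} (a consequence of $(G_2)$), $\int_\Omega G(x,t_0\varphi_1)\,dx\ge c_3 t_0^\mu-c_4$ with $c_3>0$; on the other hand, integrating the differential inequality \eqref{tt1} (from $(H_2)$ and $(H_6)$) gives $A(x,t,\xi)\le c_5(1+|\xi|^p)|t|^{\mu-(1+\alpha_3)/\eta_1}$ for $|t|\ge R_0$, while on $\{|t_0\varphi_1|<R_0\}$ the bound \eqref{tt2t} --- with $\Phi_2,\phi_2$ replaced by their maxima on $[-R_0,R_0]$ --- together with $\int_{\{|\varphi_1|<R_0/t_0\}}|\nabla\varphi_1|^p\,dx\to0$ handles the remainder; choosing the constants $\eta_1$ in $(H_2)$ and $\alpha_3$ in $(H_6)$ so that $\frac{1+\alpha_3}{\eta_1}>p$ (possible, compatibly with \eqref{good1}, whenever $\mu>p(s+1)$, as for the model problem --- cf. Remark \ref{come0} and \eqref{caso0}) one obtains $\int_\Omega A(x,t_0\varphi_1,t_0\nabla\varphi_1)\,dx=o(t_0^\mu)$. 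Hence $\J(e)=\J(t_0\varphi_1)\le o(t_0^\mu)-c_3 t_0^\mu+c_4\to-\infty$, in particular $\J(e)<\varrho_0$ for $t_0$ large, which is $(iii)$. Theorem \ref{mountainpass} then provides $u_0\in X$ with $d\J(u_0)=0$ and $\J(u_0)\ge\varrho_0>0=\J(0)$, so $u_0\ne0$; since $X\subset L^\infty(\Omega)$ and, by \eqref{diff}, the equation $d\J(u_0)=0$ is exactly the weak form of \eqref{euler}, $u_0$ is a bounded nontrivial weak solution.

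The main obstacle is the mountain pass geometry --- not the compactness, which is already granted by Proposition \ref{wcps}. Since $G$ may be supercritical, the ring around the origin must be measured by a norm stronger than $\|\cdot\|_W$, and the whole point of $\ell(u)=\|u\|_W+\|\,|u|^s u\,\|_W$ is that the reinforced coercivity $(H_8)$ of the principal part can then absorb $\int_\Omega G(x,u)\,dx$ through the embedding $W^{1,p}_0(\Omega)\hookrightarrow L^{q/(s+1)}(\Omega)$ --- which is precisely why the restriction $q<p^*(s+1)$ appears. The second delicate point is $(iii)$: one needs to bound $\int_\Omega A$ along $\varphi_1$ by $o(t_0^\mu)$, and the only estimate sharp enough for this is the polynomial bound obtained from \eqref{tt1}, whose exponent must be kept below $\mu-p$, i.e.\ one has to spend the $p$--superlinear margin between $\mu$ and $p(s+1)$.
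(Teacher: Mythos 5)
Your overall strategy is exactly the paper's: apply Theorem \ref{mountainpass} with a control function built from $\| |u|^s u\|_W$, get the ring estimate from $(H_8)$, $(G_3)$, \eqref{el1} and the Sobolev embedding of $|u|^s u$ at exponent $q/(s+1)<p^*$, and take $e$ on a ray. Your $\ell(u)=\|u\|_W+\| |u|^s u\|_W$ versus the paper's $\ell_{W,s}(u)=\max\{\|u\|_W,\| |u|^s u\|_W\}$ is an immaterial difference, and your verification of conditions $(i)$ and $(ii)$, as well as the regularity and $(wCPS)$ ingredients, are correct and coincide with the paper's argument.

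The gap is in your verification of $(iii)$. The paper simply invokes Proposition \ref{geo2} on $V=\mathrm{span}\{v^*\}$; you instead try to prove $\int_\Omega A(x,t_0\varphi_1,t_0\nabla\varphi_1)\,dx=o(t_0^\mu)$ via \eqref{tt7}, which forces you to require $\frac{1+\alpha_3}{\eta_1}>p$ (otherwise the extra factor $t_0^p$ coming from $|\xi|^p$ is not absorbed). This requirement is not available. First, it is equivalent (given \eqref{good1}) to $\mu>p(s+1)$, which is a hypothesis of the model Theorem \ref{mod0} (via \eqref{caso0}) but is \emph{not} among the hypotheses of Theorem \ref{mainesisto}: the general assumptions only yield $ps<\mu$. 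Second, your claim that one can ``choose'' the constants is backwards: $(H_2)$ remains valid only if $\eta_1$ is \emph{enlarged} and $(H_6)$ only if $\alpha_3$ is \emph{shrunk} (cf.\ Remark \ref{per2}, where $\eta_1$ is taken large precisely to make $\frac{1+\alpha_3}{\eta_1}$ small), and both adjustments decrease $\frac{1+\alpha_3}{\eta_1}$; so if the optimal constants do not already satisfy the inequality, no admissible choice will. The fix is the one behind Proposition \ref{geo2}: scale $t$ and $\xi$ \emph{jointly}. Setting $h(\tau)=A(x,\tau t,\tau\xi)$, hypotheses $(H_6)$ and then $(H_2)$ give, where $|(\tau t,\tau\xi)|\ge R_0$,
\[
\tau h'(\tau)\ =\ A_t(x,\tau t,\tau\xi)\,\tau t+a(x,\tau t,\tau\xi)\cdot\tau\xi\ \le\ \mu\,h(\tau)-\alpha_3\,a(x,\tau t,\tau\xi)\cdot\tau\xi\ \le\ \Big(\mu-\frac{\alpha_3}{\eta_1}\Big)h(\tau),
\]
whence $A(x,\tau t,\tau\xi)\le \tau^{\mu-\alpha_3/\eta_1}A(x,t,\xi)+O(1)$ and $\int_\Omega A(x,t_0\varphi_1,t_0\nabla\varphi_1)\,dx=O\big(t_0^{\mu-\alpha_3/\eta_1}\big)=o(t_0^\mu)$ using only $\alpha_3>0$. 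With this replacement (or by citing Proposition \ref{geo2} directly, as the paper does), your proof is complete.
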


\begin{remark}
We note that the estimate in hypothesis $(H_8)$
follows from $(H_4)$--$(H_6)$ if $|(t,\xi)| \ge R_0$ 
(see inequality \eqref{tt}). Here, we need such an estimate 
also for $|(t,\xi)| \le R_0$.
\end{remark}

\begin{theorem}\label{mainmolti}
Assume that $(H_0)$--$(H_7)$, $(G_0)$--$(G_2)$ and \eqref{cps0} hold.
Moreover, if $A(x,\cdot,\cdot)$ is even and
$g(x,\cdot)$ is odd for a.e. $x \in \Omega$, then functional $\J$
in \eqref{funct} possesses a sequence of critical points $(u_n)_n \subset X$ 
such that $\J(u_n) \nearrow +\infty$, i.e., 
problem \eqref{euler} admits infinitely many weak bounded solutions.
\end{theorem}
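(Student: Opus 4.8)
The plan is to deduce Theorem \ref{mainmolti} from the abstract multiplicity result, Corollary \ref{multiple}, applied to the functional $\J$ of \eqref{funct}. Four of its hypotheses are immediate: $\J\in C^1(X,\R)$ by Proposition \ref{smooth1} (its growth assumption \eqref{uno} being guaranteed by \eqref{tt2t}, cf.\ Remark \ref{smooth}); $\J$ is even because $A(x,\cdot,\cdot)$ is even and $g(x,\cdot)$ is odd, so that $G(x,\cdot)$ is even; the $(wCPS)$ condition in $\R$ — hence in $\R_+$ — is precisely Proposition \ref{wcps}; and $\J(0)=0$ may be assumed after subtracting the constant $\int_\Omega A(x,0,0)\,dx$ (finite by $(H_3)$, and zero in the model case), which alters neither the critical set nor any of the properties just listed. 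So everything reduces to checking assumption $({\cal H}_\varrho)$ of Theorem \ref{abstract} for every $\varrho>0$.

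Fix $\varrho>0$ and use the decomposition of $X$ constructed at the end of Section \ref{variational}. For integers $n<m$, to be chosen later, set $Z_\varrho=W_n^X$ and $V_\varrho=V_m$, so that $({\cal H}_\varrho)(i)$ is exactly \eqref{decompongo11}--\eqref{decompongo2}. For the symmetric neighbourhood I would take, instead of a ball in one of the two norms, the sublevel set of the ``modular'' map appearing in \eqref{ttbis}, namely
\[
\enne_\varrho\ =\ \Big\{u\in X:\ \int_\Omega(1+|u|^{ps})\,|\nabla u|^p\,dx\ <\ \rho\Big\},\qquad
\M_\varrho\ =\ \partial\enne_\varrho\ =\ \Big\{u\in X:\ \int_\Omega(1+|u|^{ps})\,|\nabla u|^p\,dx\ =\ \rho\Big\},
\]
where $\rho=\rho(\varrho)>0$ will be fixed below. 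Since $u\mapsto\int_\Omega(1+|u|^{ps})|\nabla u|^p\,dx$ is continuous and even on $X$ and $\|u\|_W^p\le\rho$ on $\enne_\varrho$, the set $\enne_\varrho$ is an open symmetric neighbourhood of $0$ which is bounded with respect to $\|\cdot\|_W$, and $({\cal H}_\varrho)(ii)$ holds.

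The heart of the matter is $({\cal H}_\varrho)(iii)$. On $\M_\varrho\cap Z_\varrho$ one has, by \eqref{ttbis}, $\int_\Omega A(x,u,\nabla u)\,dx\ge \alpha_1\tfrac{\alpha_2+\alpha_3}{\mu}\,\rho-\eta_3|\Omega|$, while \eqref{alto3} gives $\int_\Omega G(x,u)\,dx\le a_3|\Omega|+a_4\,|u|_q^q$. To bound $|u|_q^q$ on $W_n^X$ I would interpolate: choosing $q$ as in \eqref{ccps} so that $p<q<p^*(s+1)$, write $|u|_q\le|u|_p^{1-\tau}\,|u|_{p^*(s+1)}^{\tau}$ with $\tfrac1q=\tfrac{1-\tau}p+\tfrac{\tau}{p^*(s+1)}$, $\tau\in(0,1)$; on $W_n$ Poincar\'e's inequality \eqref{el} yields $|u|_p\le\lambda_{n+1}^{-1/p}\|u\|_W$, while, since $|u|^su\in W^{1,p}_0(\Omega)$ with $\big\||u|^su\big\|_W^p=(s+1)^p\int_\Omega|u|^{ps}|\nabla u|^p\,dx$ (see \eqref{c8} and Lemma \ref{rellich}), the Sobolev inequality gives $|u|_{p^*(s+1)}^{s+1}=\big|\,|u|^su\,\big|_{p^*}\le\sigma_{p^*}\big\||u|^su\big\|_W$. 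Using $\|u\|_W^p\le\rho$ and $\int_\Omega|u|^{ps}|\nabla u|^p\,dx\le\rho$ on $\M_\varrho$, these estimates combine into $|u|_q^q\le C\,\lambda_{n+1}^{-\delta}\,\rho^{\kappa}$ for explicit $\delta>0$ and $\kappa>1$ depending only on $p,q,s,N$. Hence
\[
\J(u)\ \ge\ \alpha_1\tfrac{\alpha_2+\alpha_3}{\mu}\,\rho-(\eta_3+a_3)|\Omega|-C\,a_4\,\lambda_{n+1}^{-\delta}\,\rho^{\kappa}
\qquad\hbox{for all }u\in\M_\varrho\cap Z_\varrho,
\]
and I would first fix $\rho=\rho(\varrho)$ so large that $\alpha_1\tfrac{\alpha_2+\alpha_3}{\mu}\rho-(\eta_3+a_3)|\Omega|\ge 2\varrho$, and then, since $\lambda_{n+1}\nearrow+\infty$, fix $n=n(\varrho)$ so large that $C a_4\lambda_{n+1}^{-\delta}\rho^{\kappa}\le\varrho$, obtaining $\J\ge\varrho$ on $\M_\varrho\cap Z_\varrho$, i.e.\ $({\cal H}_\varrho)(iii)$. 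This is where I expect the main obstacle: it is precisely here that the possibly critical or supercritical exponent (only $q<p^*(s+1)$ is assumed) must be absorbed, and it works only through the interplay of the two norms — the gain of integrability from $L^{p^*}$ to $L^{p^*(s+1)}$ afforded by $|u|^su\in W^{1,p}_0(\Omega)$ (Lemma \ref{rellich}), the freedom to push the codimension $n$ of $Z_\varrho$ to $+\infty$ so that $\lambda_{n+1}^{-\delta}$ dominates the growing factor $\rho^{\kappa}$, and the choice of $\enne_\varrho$ as a sublevel of the modular functional rather than of either norm.

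Finally, with $n=n(\varrho)$ fixed, pick any $m=m(\varrho)>n$. On the finite dimensional space $V_m$ all norms are equivalent and the $\varphi_i$ lie in $L^\infty(\Omega)$; \eqref{basso3} gives $\int_\Omega G(x,u)\,dx\ge c_m\|u\|_W^{\mu}-C$, while the growth estimates on $A$ established in Section \ref{variational} — in particular \eqref{tt7}, and in the model case simply $\int_\Omega A(x,u,\nabla u)\,dx\lesssim\|u\|_W^{p(s+1)}$ with $p(s+1)<\mu$ — yield $\int_\Omega A(x,u,\nabla u)\,dx\le C_m(1+\|u\|_W^{\sigma})$ for some $\sigma<\mu$. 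Hence $\J(u)\to-\infty$ as $\|u\|_W\to+\infty$ in $V_m$, and there is $R_\varrho>0$ with $\J(u)\le 0$ whenever $u\in V_m$ and $\|u\|_X\ge R_\varrho$, i.e.\ $({\cal H}_\varrho)(iv)$. Thus $({\cal H}_\varrho)$ holds for every $\varrho>0$, Corollary \ref{multiple} applies and produces a sequence $(u_n)_n\subset X$ of critical points of $\J$ with $\J(u_n)\nearrow+\infty$; by Proposition \ref{smooth1} each $u_n$ is a weak bounded solution of \eqref{euler}, which is the assertion of Theorem \ref{mainmolti}.
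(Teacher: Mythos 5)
Your overall strategy is exactly the paper's: reduce to Corollary \ref{multiple} by verifying $({\cal H}_\varrho)$ for every $\varrho>0$, with $Z_\varrho=W_n^X$, $V_\varrho=V_m$ for $m>n$, and a symmetric, $\|\cdot\|_W$--bounded surface on which $\J\ge\varrho$. Your verification of $({\cal H}_\varrho)(iii)$ is, up to cosmetics, the paper's Proposition \ref{geo1}: you take the level set of the modular $\int_\Omega(1+|u|^{ps})|\nabla u|^p\,dx$ where the paper takes $\ell_{W,s}(u)=\max\{\|u\|_W,\||u|^su\|_W\}$ (equivalent gauges, since the modular equals $\|u\|_W^p+(s+1)^{-p}\,\||u|^su\|_W^p$), and you fix $\rho$ first and then $n$, whereas the paper couples $r_n$ to $\lambda_{n+1}$ through \eqref{qq6}; the interpolation $|u|_q\le|u|_p^{1-\tau}|u|_{p^*(s+1)}^{\tau}$ combined with \eqref{el} on $W_n$ and the Sobolev inequality applied to $|u|^su$ is precisely \eqref{qq2}--\eqref{qq4}. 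That part is correct.

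The one genuine gap is your justification of $({\cal H}_\varrho)(iv)$. From \eqref{tt7} you get $A(x,t,\xi)\le\eta_4\,|t|^{\mu-\frac{1+\alpha_3}{\eta_1}}|\xi|^p$, hence on $V_m$ (where $|u|_\infty\le c_m\|u\|_W$) only the bound $\int_\Omega A(x,u,\nabla u)\,dx\le C_m\big(1+\|u\|_W^{\sigma}\big)$ with $\sigma=\mu-\frac{1+\alpha_3}{\eta_1}+p$. This exponent is below $\mu$ only if $\frac{1+\alpha_3}{\eta_1}>p$, and nothing in the standing hypotheses guarantees that: $(H_2)$ becomes easier, not harder, as $\eta_1$ grows, Remark \ref{per2} explicitly takes $\eta_1$ large, and \eqref{good1} forces $\frac{1+\alpha_3}{\eta_1}\le\mu-ps$, which is $\le p$ whenever $\mu\le p(s+1)$ --- a case not excluded by Theorem \ref{mainmolti}, where only $ps<\mu\le q$ and $p(s+1)<q$ are available ($p(s+1)<\mu$ is assumed only in the model setting \eqref{su00}). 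So the claim ``$\int_\Omega A(x,u,\nabla u)\,dx\le C_m(1+\|u\|_W^{\sigma})$ for some $\sigma<\mu$'' does not follow from \eqref{tt7} in general, and the comparison with $\int_\Omega G(x,u)\,dx\ge c_m\|u\|_W^{\mu}-C$ does not close. The paper avoids this entirely by quoting Proposition \ref{geo2} (i.e.\ \cite[Proposition 6.6]{CP2}, whose proof rests on $(H_6)$ and $(G_2)$ rather than on the pointwise growth of $A$); you should invoke that proposition directly instead of re--deriving it, since your derivation is valid only in situations, such as the model case, where $p(s+1)<\mu$ and $\eta_1$ may be chosen smaller than $(1+\alpha_3)/p$.
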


We note that if $s =0$ in $(H_4)$ and $(H_8)$ or if $p \ge N$, then 
Theorems \ref{mainesisto} and \ref{mainmolti} have already been proved in \cite{CP2}. So, here we consider 
$s > 0$ and $1 < p < N$.

Firstly, we define 
\begin{equation}\label{qq0}
\ell_{W,s}(u) \ =\ \max\{\|u\|_W, \| |u|^su\|_W\}\quad \hbox{for all $u \in X$.}
\end{equation}

\begin{remark}\label{continuo}
From \eqref{space} it follows that the map $u \mapsto \| |u|^su\|_W$ 
is well--defined and continuous in $(X,\|\cdot\|_X)$; thus,  
also $\ell_{W,s}:X \to \R$ is continuous with respect to $\|\cdot\|_X$. 
\end{remark}

From now on, assume that $(H_0)$--$(H_7)$, $(G_0)$--$(G_2)$ and \eqref{cps0} hold.

In order to prove that $\J$ satisfies some suitable geometric conditions, 
we need the following lemmas.

\begin{proposition}\label{geo1}
Fixing any $\varrho \in \R$ there exist $n \in \N$, $n=n(\varrho)$, and $r_n > 0$
such that
\begin{equation}\label{qq}
u \in W_n^X, \quad \ell_{W,s}(u) = r_n\qquad \then\qquad \J(u) \ge \varrho,
\end{equation}
where the subspace $W_n^X$ is as in \eqref{decompongo11}.
\end{proposition}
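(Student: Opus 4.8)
The plan is to obtain a lower bound for $\J(u)$ on $W_n^X$ that becomes uniformly large, in terms of $\ell_{W,s}(u)$, as $n\to\infty$. The starting point is the coercivity-type estimate \eqref{ttbis}: for a.e. $x\in\Omega$ and all $(t,\xi)$ one has $A(x,t,\xi)\ge c_0(1+|t|^{ps})|\xi|^p-\eta_3$ with $c_0=\alpha_1(\alpha_2+\alpha_3)/\mu>0$. Integrating and using \eqref{c8}, this gives
\[
\int_\Omega A(x,u,\nabla u)\,dx\ \ge\ c_0\int_\Omega|\nabla u|^p\,dx+\frac{c_0}{(s+1)^p}\int_\Omega|\nabla(|u|^su)|^p\,dx-\eta_3|\Omega|\ \ge\ c_1\,\ell_{W,s}(u)^p-\eta_3|\Omega|
\]
for a suitable $c_1>0$, since $\ell_{W,s}(u)^p=\max\{\|u\|_W^p,\||u|^su\|_W^p\}\le\|u\|_W^p+\||u|^su\|_W^p$. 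For the lower-order term I would use \eqref{alto3}: $\int_\Omega G(x,u)\,dx\le a_3|\Omega|+a_4|u|_q^q$. The exponent $q$ satisfies $p(s+1)<q<p^*(s+1)$ by \eqref{ccps}, which is exactly the range in which $|u|^su\in W^{1,p}_0(\Omega)\hookrightarrow L^{q/(s+1)}(\Omega)$ continuously; hence $|u|_q^q=\big||u|^su\big|_{q/(s+1)}^{q/(s+1)}\le\sigma^{q/(s+1)}\||u|^su\|_W^{q/(s+1)}\le\sigma^{q/(s+1)}\ell_{W,s}(u)^{q/(s+1)}$.

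Putting the two pieces together yields, for all $u\in X$,
\[
\J(u)\ \ge\ c_1\,\ell_{W,s}(u)^p\ -\ c_2\,\ell_{W,s}(u)^{q/(s+1)}\ -\ c_3,
\]
with $c_2,c_3>0$ independent of $u$. Now here is where the finite-dimensional reduction matters: the exponent $q/(s+1)>p$, so the middle term dominates and this bound is useless on all of $X$. The idea is to exploit that on $W_n^X$ a \emph{reverse} Sobolev-type inequality holds with a constant that degenerates as $n\to\infty$. Concretely, for $w\in W_n$ one has $\|w\|_W^p\ge\lambda_{n+1}|w|_p^p$ by \eqref{el}; I would combine this with an interpolation/equivalence-of-norms argument on the space $\{|w|^sw : w\in W_n^X\}$ to get, for $u\in W_n^X$,
\[
\big||u|^su\big|_{q/(s+1)}\ \le\ \nu_n\,\big\||u|^su\big\|_W
\]
with $\nu_n\to 0$ as $n\to\infty$ — this is the standard device (used already in \cite{CP2}) that on the complement of the first $n$ eigen-directions the $L^{q/(s+1)}$ norm is controlled by $\lambda_{n+1}^{-\theta}$ times the gradient norm, $\theta>0$, because $q/(s+1)<p^*$. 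Using this, the estimate on $W_n^X$ becomes
\[
\J(u)\ \ge\ c_1\,\ell_{W,s}(u)^p\ -\ c_2\,\nu_n^{\,q/(s+1)}\,\ell_{W,s}(u)^{q/(s+1)}\ -\ c_3.
\]
Maximizing the right-hand side over $\ell_{W,s}(u)=\rho>0$, the maximum value tends to $+\infty$ as $\nu_n\to 0$; so given $\varrho\in\R$ I can pick $n=n(\varrho)$ with $\nu_n$ small enough that the maximum exceeds $\varrho$, and let $r_n>0$ be the value of $\rho$ at which that maximum is (essentially) attained. Then $\ell_{W,s}(u)=r_n$ on $W_n^X$ forces $\J(u)\ge\varrho$, which is \eqref{qq}.

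The main obstacle is establishing the degenerating inequality $\big||u|^su\big|_{q/(s+1)}\le\nu_n\||u|^su\|_W$ on $W_n^X$ with $\nu_n\to0$: one must check that $v=|u|^su$ ranges over a subset of $W_n$ (this is immediate from $W_n^X\subset W_n$ and \eqref{c8}), and then that the Gagliardo–Nirenberg interpolation between $|v|_p$ and $\|v\|_W$, combined with \eqref{el}, produces the factor $\lambda_{n+1}^{-\theta}$ for the subcritical exponent $q/(s+1)\in(p,p^*)$ — exactly the step carried out in \cite[Section 5]{CP2}, which I would invoke. Everything else is routine: the lower bound on $\int A$ from \eqref{ttbis} and \eqref{c8}, the upper bound on $\int G$ from \eqref{alto3}, continuity of $\ell_{W,s}$ (Remark \ref{continuo}) so that the level set $\{\ell_{W,s}=r_n\}$ is meaningful, and the elementary one-variable maximization of $c_1\rho^p-c_2\nu_n^{q/(s+1)}\rho^{q/(s+1)}$.
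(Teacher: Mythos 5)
Your overall strategy (lower bound $c_1[\ell_{W,s}(u)]^p$ for the principal part via \eqref{ttbis} and \eqref{c8}, upper bound for $\int_\Omega G(x,u)\,dx$ via \eqref{alto3}, and then a constant degenerating like a negative power of $\lambda_{n+1}$ on $W_n^X$ so that the maximum of the one--variable function exceeds $\varrho$) is the same as the paper's. But the step you yourself flag as ``the main obstacle'' contains a genuine error: you claim that $v=|u|^s u$ ranges over a subset of $W_n$ and that this ``is immediate from $W_n^X\subset W_n$ and \eqref{c8}''. It is not. $W_n$ is a \emph{linear} subspace (a topological complement of ${\rm span}\{\varphi_1,\dots,\varphi_n\}$), and the nonlinear map $u\mapsto |u|^s u$ does not preserve it; \eqref{c8} is only the pointwise identity for the gradient and says nothing about membership in $W_n$. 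Consequently you cannot apply \eqref{el} to $v=|u|^s u$ to produce the inequality $\big||u|^s u\big|_{q/(s+1)}\le \nu_n\,\big\||u|^s u\big\|_W$ with $\nu_n\to 0$, and without that the whole finite--codimensional improvement collapses: your global bound $\J(u)\ge c_1[\ell_{W,s}(u)]^p-c_2[\ell_{W,s}(u)]^{q/(s+1)}-c_3$ has the wrong sign structure ($q/(s+1)>p$) and, as you note, is useless on its own.

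The paper circumvents exactly this difficulty by splitting the estimate of $|u|_q^q$ differently: it interpolates $|u|_q^q\le |u|_{p^*(s+1)}^{q-r}\,|u|_p^{r}$ with $\tfrac{r}{p}+\tfrac{q-r}{p^*(s+1)}=1$, controls the factor $|u|_{p^*(s+1)}^{q-r}=\big||u|^s u\big|_{p^*}^{(q-r)/(s+1)}$ by the Sobolev embedding applied to $|u|^s u$ (no $n$--dependence needed there), and extracts the degenerating constant $\lambda_{n+1}^{-r/p}$ from the factor $|u|_p^r$ by applying \eqref{el} to $u$ itself, which genuinely lies in $W_n$. This yields $|u|_q^q\le c_*\lambda_{n+1}^{-r/p}[\ell_{W,s}(u)]^{(q+rs)/(s+1)}$ on $W_n^X$, with $(q+rs)/(s+1)>p$, after which the choice of $r_n$ and the conclusion proceed as you outline. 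So your scheme can be repaired, but only by replacing your proposed ``reverse Sobolev inequality for $|u|^s u$'' with this interpolation in which the eigenvalue inequality is applied to $u$, not to $|u|^s u$.
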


\begin{proof}
Firstly, taking any $u \in X$ we note that \eqref{alto3}
and \eqref{ttbis} imply
\begin{equation}\label{qq1}
\J(u)\ \ge\ \alpha_1\ \frac{\alpha_2+\alpha_3}{\mu}\
\int_{\Omega} (1 + |u|^{ps}) |\nabla u|^{p} dx - a_4 \int_{\Omega} |u|^{q} dx - (\eta_3 + a_3) |\Omega|,
\end{equation}
while from \eqref{c8} and \eqref{qq0} it follows that
\begin{equation}\label{qq11}
\int_{\Omega} (1 + |u|^{ps}) |\nabla u|^{p} dx\ \ge\ \frac{1}{(s+1)^p}\ \big[\ell_{W,s}(u)\big]^p .
\end{equation}
On the other hand, from \eqref{ccps}, a constant
$r > 0$ exists such that $\frac{r}{p}+\frac{q-r}{p^*(s+1)} = 1$,
so, classical interpolation arguments apply and we have
\begin{equation}\label{qq2}
|u|_q^q \le |u|_{p^*(s+1)}^{q-r}\ |u|_p^r,
\end{equation}
where, by the Sobolev Embedding Theorem and \eqref{qq0}, one has 
\begin{equation}\label{qq3}
|u|_{p^*(s+1)}^{q-r}\ =\ ||u|^su|_{p^*}^{\frac{q-r}{s+1}}\ \le\ 
c_* \||u|^su\|_{W}^{\frac{q-r}{s+1}}\ \le\ c_* \big[\ell_{W,s}(u)\big]^{\frac{q-r}{s+1}},
\end{equation}
for a suitable constant $c^* > 0$. \\
Now, fixing any $n \in \N$, from \eqref{el}, \eqref{qq0}, \eqref{qq2} and \eqref{qq3} it follows that
\begin{equation}\label{qq4}
|u|_q^q\ \le\ \ c_* \lambda_{n+1}^{-\frac{r}{p}}\ \|u\|_{W}^{r}\ \big[\ell_{W,s}(u)\big]^{\frac{q-r}{s+1}}\ \le\
c_* \lambda_{n+1}^{-\frac{r}{p}}\ \big[\ell_{W,s}(u)\big]^{r + \frac{q-r}{s+1}}
\quad \hbox{for all $u \in W_n^X$,}
\end{equation}
where from \eqref{ccps} we have 
\[
r + \frac{q-r}{s+1}\ =\ \frac{q+rs}{s+1} > p.
\]
Hence, \eqref{qq1}, \eqref{qq11} and \eqref{qq4} imply that
\[
\begin{split}
\J(u)\ &\ge\ b_1\ \big[\ell_{W,s}(u)\big]^p\ -\ b_2 \lambda_{n+1}^{-\frac{r}{p}}\ 
\big[\ell_{W,s}(u)\big]^{\frac{q+rs}{s+1}} - b_3\\
&=\ b_1 \big[\ell_{W,s}(u)\big]^p\ \left(1\  -\ \frac{b_2}{b_1} 
\lambda_{n+1}^{-\frac{r}{p}}\ \big[\ell_{W,s}(u)\big]^{\frac{q+rs}{s+1} - p}\right) - b_3
\qquad \hbox{for all $u \in W_n^X$,}
\end{split}
\]
for suitable positive constants $b_1$, $b_2$, $b_3$ independent of $n$.\\
Finally, we choose $r_n > 0$ so that
\[
1 -\ \frac{b_2}{b_1}\ \lambda_{n+1}^{-\frac{r}{p}}\ r_n^{\frac{q+rs}{s+1} - p}\ =\ \frac12, 
\]
i.e., 
\begin{equation}\label{qq6}
r_n\ =\ \left(\frac{b_1}{2b_2} \ \lambda_{n+1}^{\frac{r}{p}}\right)^{\frac{s+1}{q - p (s+1) + rs}}.
\end{equation}
Thus, as $\lambda_n \nearrow +\infty$, \eqref{ccps} and \eqref{qq6} imply that
$r_n \nearrow +\infty$, then from the estimate 
\[
\J(u)\ \ge\ \frac{b_1}{2}\ r_n^p\ -\ b_3\qquad \hbox{for all $u \in W_n^X$ with $\ell_{W,s}(u)= r_n$,}
\]
the thesis follows.
\end{proof}

At last, as in \cite[Proposition 6.6]{CP2}, the following statement holds. 

\begin{proposition}\label{geo2}
For any finite dimensional subspace $V$ of $X$, there exists
$R > 0$ such that
\[
\J(u) \le 0\qquad \hbox{for all $u \in V$ with $\|u\|_X \ge R$.}
\]
Hence, $\J$ is bounded from above in $V$.
\end{proposition}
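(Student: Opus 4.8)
The plan is to prove the sharper fact that $\J(\lambda v)\to-\infty$ as $\lambda\to+\infty$, uniformly for $v$ in the unit sphere $S_V=\{v\in V:\ \|v\|_X=1\}$ of $V$: since $V$ is finite dimensional, $S_V$ is compact, and every $u\in V$ with $\|u\|_X\ge R$ can be written as $u=\lambda v$ with $v\in S_V$ and $\lambda=\|u\|_X\ge R$, hence $\nabla u=\lambda\nabla v$, so such a statement yields the thesis. Throughout I use that on the finite dimensional space $V$ all norms are equivalent, so that a constant $c_V>0$ exists with $|v|_\infty\le c_V$ and $\|v\|_W\le c_V$ for all $v\in S_V$.

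First I estimate the second integral in \eqref{funct} from below. By \eqref{basso3}, for every $v\in S_V$ and every $\lambda>0$,
\[
\int_\Omega G(x,\lambda v)\,dx\ \ge\ \lambda^\mu\int_\Omega\eta(x)\,|v|^\mu\,dx-a_5|\Omega|\ \ge\ m_V\,\lambda^\mu-a_5|\Omega|,
\]
where $m_V:=\inf_{v\in S_V}\int_\Omega\eta(x)\,|v|^\mu\,dx>0$; indeed the map $v\mapsto\int_\Omega\eta(x)|v|^\mu\,dx$ is continuous on $V$ and strictly positive on $S_V$ (as $\eta>0$ a.e. and $v\not\equiv 0$), hence it attains a positive minimum on the compact set $S_V$.

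The core of the argument is the upper estimate for $\int_\Omega A(x,\lambda v,\lambda\nabla v)\,dx$, which is delicate because of the factor $|\nabla u|^p$ carried by $A$: the crude bound \eqref{uno} is not sufficient, and one must exploit the refined growth condition \eqref{tt7}. Fixing $\eta_1$ as in the remark leading to \eqref{good1}--\eqref{good2}, condition \eqref{tt7} holds with exponent $\kappa:=\mu-\frac{1+\alpha_3}{\eta_1}$, where $0\le ps\le\kappa$, and, thanks to the standing hypotheses (compare $(H_6)$ and the discussion around \eqref{good1}--\eqref{good2}), $\eta_1$ can moreover be chosen so that $\kappa+p<\mu$. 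Splitting $\Omega$ according to the size of $(|\lambda v(x)|,|\lambda\nabla v(x)|)$ relative to $R_0$ and to $1$, on the set $\{\,|\lambda v|\ge 1,\ |\lambda\nabla v|\ge R_0\,\}$ estimate \eqref{tt7} gives $A(x,\lambda v,\lambda\nabla v)\le\eta_4\,\lambda^{\kappa+p}\,|v|^{\kappa}|\nabla v|^p$, while on the complementary set the elementary bounds coming from $(H_1)$, $(H_3)$ and \eqref{uno} yield a contribution of order at most $\lambda^p$. Using $|v|_\infty\le c_V$ and $\|v\|_W\le c_V$ (so that $\int_\Omega|v|^{\kappa}|\nabla v|^p\,dx\le c_V^{\kappa}$) one obtains a constant $C_V>0$, independent of $v\in S_V$ and of $\lambda$, with
\[
\int_\Omega A(x,\lambda v,\lambda\nabla v)\,dx\ \le\ C_V\big(\lambda^{\kappa+p}+\lambda^p+1\big)\qquad\text{for all }v\in S_V,\ \lambda>0.
\]

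Combining the two estimates, for every $v\in S_V$ and $\lambda>0$,
\[
\J(\lambda v)\ \le\ C_V\big(\lambda^{\kappa+p}+\lambda^p+1\big)-m_V\,\lambda^\mu+a_5|\Omega|,
\]
and since $\kappa+p<\mu$ and $p<\mu$, the right hand side tends to $-\infty$ as $\lambda\to+\infty$, uniformly in $v\in S_V$. Hence there is $R>0$, depending only on $V$, such that $\J(u)\le 0$ for all $u\in V$ with $\|u\|_X\ge R$; moreover $\J\in C^1(X,\R)$ is in particular continuous, hence bounded on the compact set $V\cap\bar{B}^X_R$, and therefore $\J$ is bounded from above on the whole of $V$. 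The step I expect to be the real obstacle is precisely the upper bound on $\int_\Omega A(x,\lambda v,\lambda\nabla v)\,dx$: one has to invoke the sharp growth estimate \eqref{tt7} in place of \eqref{uno}, and to check carefully that the region where $|\lambda\nabla v|$ is small, on which \eqref{tt7} does not apply, contributes only lower order terms in $\lambda$; this is carried out exactly as in \cite[Proposition 6.6]{CP2}.
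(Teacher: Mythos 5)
Your overall strategy --- showing that $\J(\lambda v)\to-\infty$ as $\lambda\to+\infty$ uniformly on the unit sphere of $V$, with \eqref{basso3} supplying the term $-m_V\lambda^\mu$ and a growth estimate controlling $\int_\Omega A(x,\lambda v,\lambda\nabla v)\,dx$ --- is exactly the route of \cite[Proposition 6.6]{CP2}, which is what the paper invokes. The gap is in the key upper bound on the $A$--term. From \eqref{tt7} you get the exponent $\kappa+p=\mu+p-\frac{1+\alpha_3}{\eta_1}$, and you need $\kappa+p<\mu$, i.e. $\eta_1<\frac{1+\alpha_3}{p}$. But $(H_2)$ has the form $A\le\eta_1\,a\cdot\xi$ with $a\cdot\xi\ge 0$, so $\eta_1$ can only be harmlessly \emph{enlarged}; indeed Remark \ref{per2} explicitly takes $\eta_1$ \emph{large} (to secure $\mu>\frac{1+\alpha_3}{\eta_1}$ and \eqref{good1}), which makes $\frac{1+\alpha_3}{\eta_1}$ small. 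Nothing in the hypotheses guarantees that an admissible $\eta_1$ below $\frac{1+\alpha_3}{p}$ exists (in the model case $A=\frac1p\bar A(x,t)|\xi|^p$ one has $\eta_1=\frac1p$ and your inequality holds, but that is special), so as written the term $\lambda^{\kappa+p}$ may dominate $\lambda^\mu$ and the conclusion does not follow.

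The reason \eqref{tt7} is too lossy here is that it is obtained by scaling $t$ alone, after which the factor $|\xi|^p$ is scaled separately and contributes a full extra $\lambda^p$. The correct estimate scales $t$ and $\xi$ jointly: for $|(t,\xi)|\ge R_0$ and $\lambda\ge 1$, hypotheses $(H_6)$ and $(H_2)$ give
\[
\frac{d}{d\lambda}A(x,\lambda t,\lambda\xi)\ =\ \frac1\lambda\Big(a(x,\lambda t,\lambda\xi)\cdot(\lambda\xi)+A_t(x,\lambda t,\lambda\xi)\,\lambda t\Big)\ \le\ \frac1\lambda\Big(\mu-\frac{\alpha_3}{\eta_1}\Big)A(x,\lambda t,\lambda\xi),
\]
whence $A(x,\lambda t,\lambda\xi)\le\lambda^{\mu-\alpha_3/\eta_1}A(x,t,\xi)$, and the exponent $\mu-\frac{\alpha_3}{\eta_1}$ is strictly smaller than $\mu$ for \emph{every} admissible $\eta_1$, with no extra choice required (your exponent exceeds this one by $p-\frac1{\eta_1}$). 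A secondary point: on the region where $|\lambda\nabla v|<R_0$ but $|\lambda v|$ is large, \eqref{uno} only yields the bound $\Phi_0(\lambda v)+\phi_0(\lambda v)R_0^p$, and $\Phi_0$, $\phi_0$ are merely continuous, so this contribution is not ``of order at most $\lambda^p$'' as you claim; it must again be controlled by integrating \eqref{tt1} in $t$ from $|t|=R_0$. With the joint--scaling estimate both issues disappear, and the rest of your argument (compactness of the unit sphere of $V$, positivity of $m_V$, boundedness of $\J$ on $V\cap\bar{B}^X_R$) goes through.
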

\medskip

\begin{proof}[Proof of Theorem \ref{mainesisto}.]
From $(G_3)$, we can take $\bar\lambda \in \R$ so that
\begin{equation}\label{lamb}
\limsup_{t\to 0}\frac{g(x,t)}{|t|^{p-2}t} < \bar\lambda < p \alpha_4 \lambda_1 .
\end{equation}
Then, from $(G_1)$, \eqref{lamb}  and standard computations, a suitable constant $b_1 > 0$ exists such that 
\[
G(x,t) \le \frac{\bar\lambda}p |t|^p + b_1 |t|^{q}\qquad\hbox{a.e. in $\Omega$, for all $t \in \R$;}
\]
hence, from $(H_8)$, \eqref{c8} and \eqref{el1}, we obtain
\[
\J(u)\ \ge\ \left(\alpha_4 - \frac{\bar\lambda}{p\lambda_1}\right)\ \|u\|_W^{p} + \frac{\alpha_4}{(s+1)^p} \||u|^s u\|_W^{p}
  - b_1 |u|_q^{q}
\quad \hbox{for all $u \in X$,}
\]
with $\alpha_4 - \frac{\bar\lambda}{p\lambda_1} > 0$.
Now, since \eqref{cps0} holds, by Sobolev Embedding Theorem and \eqref{qq0}, we have
\[
\int_\Omega |u|^{q} dx\ =\ \int_\Omega ||u|^su|^{\frac{q}{s+1}} dx\ \le\ b_2 \||u|^su\|_{W}^{\frac{q}{s+1}}\ \le\ 
b_2 \big[\ell_{W,s}(u)\big]^{\frac{q}{s+1}}
\]
for some $b_2 > 0$. Thus, from the previous estimates it follows that there exist 
$b_3$, $b_4>0$ such that
\[
\J(u)\ \ge\ b_3\ \big[\ell_{W,s}(u)\big]^{p} - b_4 \big[\ell_{W,s}(u)\big]^{\frac{q}{s+1}}
\quad \hbox{for all $u \in X$.}
\]
Whence, from \eqref{ccps} some strictly positive constants $r_0$, $\varrho_0 > 0$ can be chosen so that
$\J(u) \ge \varrho_0$ if $\ell_{W,s}(u) = r_0$.\\
On the other hand, taking any $v^* \in X\setminus \{0\}$, by Proposition \ref{geo2} with $V = {\rm span}\{v^*\}$
and the equivalence of $\|\cdot\|_X$ and $\|\cdot\|_W$ in $V$,
an element $e\in V$ exists such that $\|e\|_W > r_0$ and $\J(e) \le 0$.\\
Whence, as without loss of generality we can assume $\int_\Omega A(x,0,0) dx = 0$, 
it is $\J(0) = 0$, so Proposition \ref{wcps} and Theorem \ref{mountainpass}
imply that $\J$ has at least a nontrivial critical point. 
\end{proof}
\bigskip

\begin{proof}[Proof of Theorem \ref{mainmolti}.]
For simplicity, if $r > 0$ we set
\[
\M_r = \{u \in X:\; \ell_{W,s}(u)= r\}. 
\]
We note that $\M_r$ is the boundary of a neighborhood of the origin which is 
symmetric and bounded with respect to $\|\cdot\|_W$.\\
Then, fixing any $\varrho > 0$, from Proposition \ref{geo1} an integer $n \in \N$ and
a constant $r_{n} > 0$ exist such that \eqref{qq} holds, i.e.
\[
u \in \M_{r_{n}} \cap W_{n}^X \quad \then\quad \J(u) \ge \varrho.
\]
Now, taking any $m > n$, from \eqref{decompongo2}  
the $m$--dimensional space $V_m$ is such that 
$\codim W^X_{n} < \dim V_m$; thus, Proposition \ref{geo2} and the previous remarks
imply that assumption $({\cal H}_\varrho)$ in Theorem \ref{abstract} holds.\\
At last, without loss of generality we can assume $\int_\Omega A(x,0,0) dx = 0$, 
then $\J(0) = 0$ and for the arbitrariness of $\varrho > 0$ and 
Proposition \ref{wcps} we have that Corollary \ref{multiple} applies.
\end{proof}

\begin{proof}[Proof of Theorem \ref{mod0}.]
The proof follows from Theorem \ref{mainmolti} and Remark \ref{caso0} with $g(x,t) = |t|^{\mu-2}t$ 
and so $q = \mu$.
\end{proof}


\end{document}